\newtheorem{lemma}{Lemma}
\newtheorem{theorem}[lemma]{Theorem}
\newtheorem{corollary}[lemma]{Corollary}
{\theorembodyfont{\upshape}\newtheorem{probl}[lemma]{Problem}
{\theorembodyfont{\upshape}}
{\theorembodyfont{\upshape}}
{\theorembodyfont{\upshape}\newtheorem{notee}[lemma]{Note}}
{\theorembodyfont{\upshape}\newtheorem{exampl}[lemma]{Example}}
{\theorembodyfont{\upshape}}

\newenvironment{example}{\begin{exampl}}{\hfill$\Box$\end{exampl}}

\newenvironment{note}{\begin{notee}}{\hfill$\Box$\end{notee}}
\newcommand{\C}{{\bf C}}

\newcommand{\N}{{\bf N}}

\newcommand{\R}{{\bf R}}

\newcommand{\rme}{{\rm e}}
\newcommand{\rmd}{{\rm d}}


\newcommand{\cD}{{\cal D}}

\newcommand{\cL}{{\cal L}}

\newcommand{\cW}{{\cal W}}
\newcommand{\wt}[1]{\widetilde{#1}}

\newcommand{\tH}{\wt{H}}
\newcommand{\tV}{\wt{V}}
\newcommand{\sig}{\sigma}
\newcommand{\alp}{\alpha}
\newcommand{\bet}{\beta}
\newcommand{\gam}{\gamma}

\newcommand{\lam}{\lambda}

\newcommand{\eps}{\varepsilon}

\newcommand{\zet}{\zeta}


\newcommand{\grad}{\nabla}

\newcommand{\Dom}{{\rm Dom}}

\newcommand{\Spec}{{\rm Spec}}

\newcommand{\Ker}{{\rm Ker}}
\newcommand{\Ran}{{\rm Ran}}

\newcommand{\norm}{\Vert}

\newcommand{\supp}{{\rm supp}}

\newcommand{\lin}{{\rm lin}}

\newcommand{\Schrodinger}{Schr\"odinger }

\newcommand{\la}{{\langle}}
\newcommand{\ra}{{\rangle}}
\newcommand{\pr}{\prime}


\newenvironment{eq}{\begin{equation}}{\end{equation}}

\newcommand{\eqref}[1]{(\ref{#1})}

\newenvironment{proof}{\textbf{Proof}}{\hfill$\Box$}

\newenvironment{choices}{ \left\{ \begin{array}{ll} }{\end{array}\right.}

\newcommand{\move}[1]{}
\hyphenation{hypo-thesis hypo-theses}
\usepackage{amssymb}
\usepackage{hyperref}
\title{Singular \Schrodinger operators\\
 in one dimension\\
arXiv version}
\author{E. B. Davies}
\date{16 December 2011}
\begin{document}
\maketitle
\begin{abstract}
We consider a class of singular \Schrodinger operators $H$ that act in $L^2(0,\infty)$, each of which is constructed from a positive function $\phi$ on $(0,\infty)$. Our analysis is direct and elementary. In particular it does not mention the potential directly or make any assumptions about the magnitudes of the first derivatives or the existence of second derivatives of $\phi$. For a large class of $H$ that have discrete spectrum, we prove that the eigenvalue asymptotics of $H$ does not depend on rapid oscillations of $\phi$ or of the potential. Similar comments apply to our treatment of the existence and completeness of the wave operators.
\end{abstract}

MSC 2010 subject classifications: 34L05, 34L20, 34L25, 34B27, 47Exx

Key words: \Schrodinger operator, Sturm-Liouville operator, rapidly oscillating potential, distributional potential, eigenvalue asymptotics, wave operators, scattering.

\section{Introduction}\label{intro}

The study of \Schrodinger operators traditionally starts by specifying the class of potentials to be studied. In this paper we start instead from a positive function $\phi$ (considered as a fundamental solution of a so far undefined \Schrodinger operator) and reconstruct the theory without making any assumptions about the existence of a second derivative of $\phi$. The success of this approach allows one to deal with a large class of \Schrodinger operators with distributional potentials, which is similar to the classes studied by Savchuk and Shkalikov in \cite{SS2}. It is also relevant to \Schrodinger operators with rapidly oscillating potentials, such as $V(x)=x^\alp \sin(x^\bet)$, where $\alp\in\R$ and $\bet>1$. The literature on this subject is extensive; \cite{AEZ,B-AD, comb,ET,HM,matv,ming,nest,pear,SS1,SS2,skrig,white} is a selection of a few of the early papers and other, much more recent, ones that contain many earlier references.

Starting from $\phi$ rather than from the potential $V$ makes the analysis of the \Schrodinger operator $H$ much simpler. This is justifiable if one takes for granted earlier work that details how to deduce asymptotic properties of $\phi$ from assumed properties of $V$; we summarize some of the important results on this subject, although the body of the paper does not depend on them. If $V\in L^1(0,\infty)$ then for all large enough $c>0$ the \Schrodinger equation $-\phi^{\pr\pr} +(V+c^2)\phi=0$ has a unique positive solution $\phi\in C^1[0,\infty)$ such that $\phi(x)\sim \rme^{-cx}$ and $\phi^\pr(x)\sim -c\rme^{-cx}$ as $x\to\infty$; this solution is obtained by solving the Jost equation
\[
f(x)=\rme^{-cx}+\frac{1}{c}\int_x^\infty \sinh(c(y-x)) V(y)f(y)\, \rmd y.
\]
See, for example, \cite{atk} and \cite[Theorem~XI.57]{RS3}. If $V$ is highly oscillatory but not in $L^1(0,\infty)$ then in certain special situations it is still possible to solve the \Schrodinger equation and develop a detailed spectral or scattering theory, depending on the assumptions about $V$ or $\phi$; see \cite[Appx.~2,3 to XI.8]{RS3} and \cite{comb,matv,pear,skrig,white}.

We carry out the analysis for an operator defined on $L^2(0,\infty)$, but the methods can be applied to finite intervals with obvious modifications. Our constructions assume that one is given a function $\phi\in L^2(0,\infty)$ which is positive and continuous on $[0,\infty)$ with $\phi^\pr\in L^2_{{\rm loc}}[0,\infty)$. Although further bounds on $\phi$ are imposed later, we never require any local or global bounds on the size of $\phi^\pr$, nor the existence of $\phi^{\pr\pr}$.

Theorems~\ref{Htheorem} and \ref{Hgamtheorem} concern a second order differential operator $H$ acting in $L^2(0,\infty)$ according to the formula
\[
Hf=\phi^{-1}(\phi^\pr f-\phi f^\pr)^\pr
\]
and subject to Dirichlet and then Robin boundary conditions at $0$; the precise domain of $H$ is determined for each choice of boundary condition. Corollary~\ref{HEScoro} proves that if $\phi$ converges super-exponentially to $0$ as $x\to\infty$ in a certain precise sense then the essential spectrum of $H$ is empty. We also determine the order of growth of the eigenvalues of $H$.  Theorem~\ref{scatt-theorem} provides conditions for the existence and completeness of the wave operators. Once again, we do not assume any bounds on $\phi^\pr$, or on the potential if one exists.

We will not use the following alternative expressions for $H$, but they serve to connect our approach to the `regularization method' described in \cite[sects.~1,~2]{HM}, whose notation we follow; see also \cite{AEZ,BE,EGNT,SS1,SS2} and \cite[Sect.~9.1]{tes}. One may rewrite $H$ in the form
\begin{eqnarray}
Hf&=&-(f^\pr-\tau f)^\pr-\tau(f^\pr-\tau f)\\
&=& -\left(\frac{\rmd}{\rmd x}+\tau\right)\left(\frac{\rmd}{\rmd x}-\tau\right)
\end{eqnarray}
where $\tau=\phi^\pr/\phi$. This factorization, known to Jacobi, is equivalent to the factorization (\ref{G=MM}) of the resolvent $G$ which underlies much of our analysis in Sections~\ref{sectSA} and \ref{sectST}.

If one puts
\[
\sig(x)=\tau(x)+\int_0^x \tau(y)^2 \, \rmd y
\]
for all $x\geq 0$, then one obtains
\[
Hf= -(f^\pr -\sig f)^\pr-\sig f^\pr.
\]
If there is a potential $V$ then $\phi^{\pr\pr}/\phi=V$, $\sig$ is the indefinite integral of $V$ and $\tau$ is a solution of the Riccati equation $\tau^\pr+\tau^2=V$. The relationship between solutions of Riccati differential equations and of linear second order differential equations has been known for centuries; we refer to \cite{BL,reid} and \cite[Chapter~XIII.3B]{RS4} for some aspects of the theory of the Riccati equation and its applications. The condition $\sig\in L^2_{{\rm loc}}[0,\infty)$ corresponds to $V\in W^{-1,2}_{{\rm loc}}[0,\infty)$, while $\sig$ has bounded variation on every interval $[0,a]$ if and only if $V$ is a countably additive signed Borel measure.

The next section is taken up with certain preliminaries that are needed later. In Section~\ref{sectDBC} we consider the Green functions and use them to define the related second order differential operators when Dirichlet boundary conditions are imposed at $0$. Section~\ref{sectRBC} studies the same problem subject to Robin boundary conditions with a complex parameter.

\section{Preliminaries}

We write down a number of identities involving functions defined on $[0,\infty)$ below. If we do not refer to the variable explicitly, we mean that the two functions concerned are equal almost everywhere, which always means with respect to Lebesgue measure. If the functions involved are continuous this implies that they are equal everywhere. When we refer to something being true locally in $[0,\infty)$, we mean that it holds for all intervals of the form $[0,a]$. We need the following standard lemmas relating to weak derivatives in one dimension. Following \cite[Lemma~7.1.1]{STDO}, we say that $f\in \cW=W^{1,2}_{{\rm loc}}[0,\infty)$ if $f$ is continuous on $[0,\infty)$ and there exists a function $g\in L^2_{{\rm loc}}[0,\infty)$ such that
\begin{eq}
f(x)=f(0)+\int_0^x g(s)\, \rmd s \label{weakderdef}
\end{eq}
for all $x\geq 0$. It is easy to prove that given $f$, only one such $g$ can exist, and if it does we call it the weak derivative of $f$ and write $f^\pr=g$.

\begin{lemma}
If $f\in\cW$ satisfies $f(x)>0$ for all $x\geq 0$ then
$f^\alp\in\cW$ for every $\alp\in \R$ and $(f^\alp)^\pr=\alp
f^{\alp-1}f^\pr$.
\end{lemma}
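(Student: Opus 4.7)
The plan is to reduce the claim to the familiar chain rule for absolutely continuous functions, after localizing so that $f$ stays bounded away from $0$ and $\infty$.

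First I would fix an arbitrary $a>0$ and work on $[0,a]$. Since $f$ is continuous and positive on the compact interval $[0,a]$, it attains a positive minimum $m$ and a finite maximum $M$, so $f([0,a])\subset[m,M]\subset(0,\infty)$. On this compact subinterval of $(0,\infty)$, the function $g(t)=t^\alp$ is $C^1$; in particular it is Lipschitz, with Lipschitz constant $L=|\alp|\max(m^{\alp-1},M^{\alp-1})$. Because \eqref{weakderdef} expresses $f$ as an indefinite Lebesgue integral on $[0,a]$, $f$ is absolutely continuous there; composing an AC function with a Lipschitz function yields an AC function, so $f^\alp=g\circ f$ is absolutely continuous on $[0,a]$.

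Next I would apply the standard chain rule for absolutely continuous functions (or equivalently for Sobolev compositions in one dimension): if $g\in C^1$ on a neighbourhood of the range of an AC function $f$, then $g\circ f$ is AC and $(g\circ f)^\pr=g^\pr(f)f^\pr$ almost everywhere. Applied here this gives
\[
f^\alp(x)=f^\alp(0)+\int_0^x \alp\, f(s)^{\alp-1} f^\pr(s)\,\rmd s \qquad (0\leq x\leq a).
\]
To fit this into the definition of $\cW$, I need the integrand to lie in $L^2_{{\rm loc}}[0,\infty)$. Since $f^{\alp-1}$ is bounded on $[0,a]$ by $\max(m^{\alp-1},M^{\alp-1})$, and $f^\pr\in L^2_{{\rm loc}}[0,\infty)$ by hypothesis, the product $\alp f^{\alp-1}f^\pr$ is in $L^2[0,a]$. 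Since $a$ was arbitrary, it is in $L^2_{{\rm loc}}[0,\infty)$, so the integral representation (\ref{weakderdef}) is verified globally and $f^\alp\in\cW$ with $(f^\alp)^\pr=\alp f^{\alp-1}f^\pr$.

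The one step that requires a moment's care is the chain rule itself, since we are in the weak-derivative setting rather than the smooth one. The cleanest route is the AC-composition argument above; alternatively one could mollify $f$, using the lower bound $m>0$ to ensure that the mollifications are uniformly bounded away from $0$ on $[0,a]$ for large regularization parameter, apply the classical chain rule to the smooth approximants, and pass to the limit in $L^2[0,a]$ using the boundedness of $g^\pr$ on $[m/2,2M]$. Either way, the positivity hypothesis is exactly what lets $g(t)=t^\alp$ be treated as a smooth function on the relevant range, so the only genuine obstacle is bookkeeping around the local bounds $m$ and $M$.
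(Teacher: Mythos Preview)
Your argument is correct. The paper follows what you list as your alternative route: it picks $f_n\in C^1[0,a]$ with $f_n\to f$ uniformly and $f_n^\pr\to f^\pr$ in $L^2(0,a)$, uses the lower bound to ensure $f_n$ stays bounded away from $0$ for large $n$, writes the fundamental theorem of calculus for $f_n^\alp$, and passes to the limit. Your primary route via the AC chain rule (Lipschitz outer function composed with an AC function) is equally valid and arguably cleaner, since it avoids constructing the approximating sequence and checking the limit; the paper's approximation argument, on the other hand, is slightly more self-contained in that it does not invoke the AC chain rule as a black box.
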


\begin{proof}
By the definition of $\cW$ it is sufficient to treat the case in which $x\in [0,a]$, for every $a>0$. The assumption on $f$ implies that it is continuous and hence implies the existence of $c>0$ such that $1/c\leq f(x)\leq c$ for all $x\in [0,a]$. There exists a sequence $f_n\in C^1[0,a]$ such that $f_n$ converges uniformly to $f$ and $f_n^\pr$ converges in the $L^2(0,a)$ norm to $f^\pr$ as $n\to\infty$. Therefore $1/(2c)\leq f_n(x)\leq 2c$  uniformly with respect to $x\in [0,a]$, for all large enough $n$. The fundamental theorem of calculus implies that
\[
f_n(x)^\alp=f_n(0)^\alp+\int_0^x \alp f_n(s)^{\alp-1}f_n^\pr(s)\,
\rmd s
\]
for all $n$ and all $x\in [0,a]$. Letting $n\to\infty$ yields
\[
f(x)^\alp=f(0)^\alp+\int_0^x \alp f(s)^{\alp-1}f^\pr(s)\, \rmd s,
\]
which completes the proof.
\end{proof}

\begin{lemma}\label{leibniz}
If $f,\, g\in\cW$ then $fg\in\cW$ and
\[
(fg)^\pr=f^\pr g+fg^\pr.
\]
\end{lemma}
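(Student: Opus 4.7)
The plan is to mimic the strategy of the preceding lemma: reduce to the bounded interval $[0,a]$, approximate $f$ and $g$ by $C^1$ functions, establish the Leibniz identity in integrated form for the smooth approximants via the fundamental theorem of calculus, and then pass to the limit.

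First I would fix $a>0$ and note that since $f,g\in\cW$ they are continuous on $[0,a]$, hence there exists $c>0$ such that $|f(x)|\leq c$ and $|g(x)|\leq c$ for all $x\in[0,a]$. By the standard density result underlying the definition of $\cW$ (already invoked in the previous proof), one may choose sequences $f_n,g_n\in C^1[0,a]$ with $f_n\to f$ and $g_n\to g$ uniformly on $[0,a]$ and $f_n^\pr\to f^\pr$, $g_n^\pr\to g^\pr$ in $L^2(0,a)$. For $n$ large enough, $\|f_n\|_\infty$ and $\|g_n\|_\infty$ are bounded by $2c$ on $[0,a]$.

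Next I would apply the classical Leibniz rule and the fundamental theorem of calculus to the smooth functions, which gives
\[
f_n(x)g_n(x)=f_n(0)g_n(0)+\int_0^x\bigl(f_n^\pr(s)g_n(s)+f_n(s)g_n^\pr(s)\bigr)\,\rmd s
\]
for all $x\in[0,a]$. The left-hand side converges to $f(x)g(x)$ uniformly in $x$. For the right-hand side I would use Cauchy--Schwarz on $(0,a)$: since $g_n\to g$ uniformly and $f_n^\pr\to f^\pr$ in $L^2(0,a)$, the product $f_n^\pr g_n$ converges to $f^\pr g$ in $L^1(0,a)$, and symmetrically for $f_n g_n^\pr$. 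Passing to the limit yields
\[
f(x)g(x)=f(0)g(0)+\int_0^x\bigl(f^\pr(s)g(s)+f(s)g^\pr(s)\bigr)\,\rmd s
\]
for all $x\in[0,a]$.

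Finally I would verify the membership in $\cW$: since $f,g$ are continuous and hence locally bounded and $f^\pr,g^\pr\in L^2_{{\rm loc}}[0,\infty)$, the function $f^\pr g+fg^\pr$ lies in $L^2_{{\rm loc}}[0,\infty)$. Together with the integral representation above (valid for every $a>0$), this matches the definition (\ref{weakderdef}), so $fg\in\cW$ with weak derivative $f^\pr g+fg^\pr$. I do not expect any serious obstacle here; the only point requiring care is ensuring the convergence of the products in $L^1$, which is immediate from Cauchy--Schwarz combined with the uniform bounds on $g_n$ and $f_n$.
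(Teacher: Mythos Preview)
Your argument is correct. It differs from the paper's, which is shorter but less direct: the paper observes that the approximation argument of the preceding lemma goes through for the exponent $\alp=2$ without the positivity hypothesis (since $x\mapsto x^2$ is globally $C^1$), giving $(h^2)^\pr=2hh^\pr$ for all $h\in\cW$, and then deduces the Leibniz rule from the polarization identity
\[
fg=\frac{(f+g)^2}{4}-\frac{(f-g)^2}{4}.
\]
Your route approximates $f$ and $g$ simultaneously and passes to the limit in the bilinear expression, which is slightly longer but entirely self-contained and avoids re-examining the previous proof. The paper's trick buys brevity; your version makes the convergence of the cross terms explicit and would generalize more readily (for instance to products of three or more factors) without further algebraic identities.
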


\begin{proof}
We first note that the proof of the previous lemma applies to the case $\alp=2$ without assuming that $f>0$. The proof of the present lemma is completed by noting that
\[
fg=\frac{(f+g)^2}{4}-\frac{(f-g)^2}{4}.
\]
\end{proof}

\begin{lemma}
There exists a unique function $\psi\in \cW$ such that $\psi(0)=0$ and $\psi^\pr(x)\phi(x)- \phi^\pr(x)\psi(x)=1$ for almost all $x\geq 0$.
\end{lemma}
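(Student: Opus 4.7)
The natural approach is to rewrite the target identity $\psi^\pr\phi-\phi^\pr\psi=1$, after dividing by $\phi^2>0$, as $(\psi/\phi)^\pr=1/\phi^2$. This pins down the unique candidate
\[
\psi(x)=\phi(x)\int_0^x \phi(s)^{-2}\,\rmd s,
\]
and I would now verify it has the claimed properties and then rule out any other solution.

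For existence, applying the first lemma with $\alp=-2$ to $\phi\in\cW$ gives $\phi^{-2}\in\cW$; in particular $\phi^{-2}$ is continuous on $[0,\infty)$, so the indefinite integral $\eta(x)=\int_0^x \phi(s)^{-2}\,\rmd s$ lies in $\cW$ with $\eta(0)=0$ and weak derivative $\phi^{-2}$. Lemma~\ref{leibniz} applied to $\psi=\phi\eta$ then yields $\psi\in\cW$ with $\psi^\pr=\phi^\pr\eta+\phi\eta^\pr=\phi^\pr\eta+\phi^{-1}$, so
\[
\psi^\pr\phi-\phi^\pr\psi=(\phi^\pr\eta+\phi^{-1})\phi-\phi^\pr(\phi\eta)=1
\]
almost everywhere, and $\psi(0)=\phi(0)\cdot 0=0$.

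For uniqueness, suppose $\psi_1,\psi_2\in\cW$ both satisfy the conditions and set $\delta=\psi_1-\psi_2$. Then $\delta\in\cW$, $\delta(0)=0$, and $\delta^\pr\phi-\phi^\pr\delta=0$ almost everywhere. The quotient $\delta/\phi$ lies in $\cW$ by the first lemma (applied with $\alp=-1$ to $\phi$) together with Lemma~\ref{leibniz}, and those same lemmas compute its weak derivative as $(\delta^\pr\phi-\phi^\pr\delta)/\phi^2=0$ a.e. The definition \eqref{weakderdef} of weak derivative then forces $\delta/\phi$ to be constant, and $\delta(0)=0$ pins this constant at $0$, so $\delta\equiv 0$. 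There is no real obstacle; the only point requiring care is to confirm that each algebraic manipulation (inverse powers, products, quotients) keeps us inside $\cW$, which is exactly the content of the two preceding lemmas.
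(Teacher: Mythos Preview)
Your argument is correct and follows the same route as the paper: you write down the explicit candidate $\psi(x)=\phi(x)\int_0^x\phi(s)^{-2}\,\rmd s$, verify the Wronskian identity, and for uniqueness reduce to showing that $\delta/\phi$ has vanishing weak derivative and hence is constant. You are simply more explicit than the paper in invoking the two preceding lemmas to justify that the relevant products and powers lie in $\cW$, which is a virtue rather than a deviation.
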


\begin{proof}
One may directly verify that the function
\begin{eq}
\psi(x)=\phi(x)\int_0^x\frac{\rmd s}{\phi(s)^2}\label{psidef}
\end{eq}
has the stated properties. If $\xi$ is another such function and $\sig=\psi-\xi$ then
$\sig^\pr\phi- \phi^\pr\sig=0$. Differentiating $\tau=\sig/\phi\in \cW$ one obtains $\tau^\pr=0$, so $\tau$ is constant. But $\tau(0)=(\psi(0)-\xi(0))/\phi(0)=0$. Therefore $\tau=0$ and $\xi=\psi$.
\end{proof}

Our next lemma justifies calling $\phi$ the subordinate function in $\cL=\lin\{\phi,\psi\}$ in the sense that it is the smallest function in $\cL$ at infinity, up to a multiplicative constant.
\begin{lemma}\label{psi/phi}
The function $\psi$ is positive on $(0,\infty)$. Moreover $\psi/\phi$ is strictly monotonic increasing on $(0,\infty)$ and
\[
\lim_{x\to \infty} \frac{\psi(x)}{\phi(x)}=+\infty.
\]
\end{lemma}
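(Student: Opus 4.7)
The plan is to read all three claims directly off the explicit formula \eqref{psidef} from the previous lemma, namely
\[
\frac{\psi(x)}{\phi(x)} \;=\; \int_0^x \frac{\rmd s}{\phi(s)^2},
\]
which is valid for $x\geq 0$ because $\phi$ is strictly positive and continuous.

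First I would dispose of the two easy properties. Since $\phi$ is continuous and positive on $[0,\infty)$, the integrand $1/\phi(s)^2$ is a strictly positive continuous function of $s$, so the integral on the right is strictly positive for $x>0$ and strictly increasing in $x$. Multiplying by $\phi(x)>0$ gives $\psi(x)>0$ on $(0,\infty)$, and the monotonicity of $\psi/\phi$ is immediate from the displayed formula.

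The main content is therefore the divergence as $x\to\infty$, where the hard part is that we are not assuming any pointwise upper bound on $\phi$; only $\phi\in L^2(0,\infty)$. The trick is to exploit this integrability via the Cauchy--Schwarz inequality applied to the pair $\phi(s)$ and $1/\phi(s)$ on $[0,x]$:
\[
x \;=\; \int_0^x 1\,\rmd s \;=\; \int_0^x \phi(s)\cdot\frac{1}{\phi(s)}\,\rmd s
\;\leq\; \Bigl(\int_0^x \phi(s)^2\,\rmd s\Bigr)^{1/2}\Bigl(\int_0^x \frac{\rmd s}{\phi(s)^2}\Bigr)^{1/2}.
\]
Rearranging and using $\int_0^x \phi^2\,\rmd s \leq \norm\phi\norm_2^2$ yields
\[
\frac{\psi(x)}{\phi(x)} \;=\; \int_0^x \frac{\rmd s}{\phi(s)^2}\;\geq\; \frac{x^2}{\norm\phi\norm_2^{\,2}},
\]
which tends to $+\infty$ as $x\to\infty$. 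This completes the proof. The only real obstacle is to realize that the $L^2$ hypothesis on $\phi$ is exactly what forces $1/\phi^2$ to have a divergent integral, and Cauchy--Schwarz quantifies this.
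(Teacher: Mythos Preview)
Your proof is correct and follows essentially the same argument as the paper: the first two claims are read off directly from the formula $\psi(x)/\phi(x)=\int_0^x \phi(s)^{-2}\,\rmd s$, and the divergence is obtained by exactly the same Cauchy--Schwarz estimate $x^2\leq \norm\phi\norm^2\,\psi(x)/\phi(x)$.
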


\begin{proof}
The first and second statement follow directly from (\ref{psidef}). The final statement follows from
\begin{eqnarray*}
x^2&=&\left( \int_0^x \rmd s\right)^2\\
&\leq& \int_0^x\phi(s)^2\, \rmd s\int_0^x\frac{\rmd s}{\phi(s)^2}\\
&\leq& \norm \phi\norm^2\int_0^x\frac{\rmd s}{\phi(s)^2}\\
&=& \norm \phi\norm^2 \frac{\psi(x)}{\phi(x)}.
\end{eqnarray*}
\end{proof}

If $f$ is a positive, twice differentiable function on some interval $I$ and $-f^{\pr\pr}+Vf=0$ then a simple calculation establishes that
\begin{eq}
V(x)=\frac{\rmd}{\rmd x} \left( \frac{f^\pr(x)}{f(x)}+\int_a^x%
\frac{f^\pr(s)^2}{f(s)^2}\, \rmd s\right)\label{Vdef}
\end{eq}
for any $a\in I$. This motivates the following lemma, in which we do not assume that any second derivatives exist. It also establishes the weak existence of a potential $V\in W_{{\rm loc}}^{-1,2}$, defined as the weak derivative of either side of (\ref{weakpot}).
\begin{lemma}
If $f,\, g$ are any two positive functions in $\cL={\rm lin} \{ \phi,\psi\}$ then there exists a constant $c$ such that
\begin{eq}
\frac{f^\pr(x)}{f(x)}+\int_a^x\frac{f^\pr(s)^2}{f(s)^2}\, \rmd s=%
\frac{g^\pr(x)}{g(x)}+\int_a^x\frac{g^\pr(s)^2}{g(s)^2}\, \rmd s+c\label{weakpot}
\end{eq}
for all $x\geq 0$.
\end{lemma}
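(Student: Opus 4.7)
My plan is to show that for each fixed $a\geq 0$ the difference
\[
D(x)=\frac{f^\pr(x)}{f(x)}-\frac{g^\pr(x)}{g(x)}+\int_a^x\left(\frac{f^\pr(s)^2}{f(s)^2}-\frac{g^\pr(s)^2}{g(s)^2}\right)\rmd s
\]
defines a function in $\cW$ whose weak derivative vanishes; it will then be constant, and that constant is the $c$ we seek.

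The first step is to exploit the fact that any two elements of $\cL$ have a constant Wronskian. Writing $f=\alp_1\phi+\alp_2\psi$ and $g=\bet_1\phi+\bet_2\psi$, direct expansion together with the defining identity $\psi^\pr\phi-\phi^\pr\psi=1$ gives $f^\pr g-fg^\pr=\alp_2\bet_1-\alp_1\bet_2=:\kap$ almost everywhere. Dividing by $fg>0$ produces the key identity
\[
\frac{f^\pr(x)}{f(x)}-\frac{g^\pr(x)}{g(x)}=\frac{\kap}{f(x)g(x)}
\]
for a.e.\ $x$. The right-hand side is continuous (indeed in $\cW$), so this identifies a continuous representative of $f^\pr/f-g^\pr/g$ and thereby lets us regard $D$ as a genuine continuous function.

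Next I would compute the weak derivative of $\kap/(fg)$. By Lemma~\ref{leibniz} one has $fg\in\cW$ with $(fg)^\pr=f^\pr g+fg^\pr$, and the first lemma applied to the positive function $fg$ with exponent $-1$ yields
\[
\left(\frac{\kap}{fg}\right)^\pr=-\frac{\kap(f^\pr g+fg^\pr)}{(fg)^2}=-\frac{(f^\pr g-fg^\pr)(f^\pr g+fg^\pr)}{(fg)^2}=-\left(\frac{f^\pr}{f}\right)^2+\left(\frac{g^\pr}{g}\right)^2.
\]
The weak derivative of the integral term in $D$ is exactly $(f^\pr/f)^2-(g^\pr/g)^2$, so the two contributions cancel and $D$ has weak derivative zero. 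Therefore $D$ is constant, and taking $c=D(a)=\kap/(f(a)g(a))$ completes the argument.

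The only real subtlety I anticipate is the bookkeeping between almost-everywhere identities and pointwise or $\cW$-level statements, since $f^\pr/f$ is only defined almost everywhere. What rescues the argument is the Wronskian identity above: it replaces the a.e.-defined quantity $f^\pr/f-g^\pr/g$ by an everywhere-continuous function, after which the $\cW$-calculus of the preceding two lemmas applies directly and makes every step rigorous.
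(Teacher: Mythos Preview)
Your proof is correct and follows essentially the same route as the paper: both reduce to the constant-Wronskian identity $f^\pr g-fg^\pr=\kap$, rewrite $f^\pr/f-g^\pr/g$ as $\kap/(fg)$, and then check that the weak derivative of this term exactly cancels the integrand. The only difference is cosmetic---the paper packages the cancellation by rewriting $h$ as $\kap/(fg)+\int k(f^\pr g+g^\pr f)/(fg)^2$ and declaring the result immediate, whereas you differentiate explicitly---and your extra paragraph on a.e.\ versus $\cW$-level identities is a welcome bit of care that the paper leaves implicit.
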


\begin{proof}
It is sufficient to prove that
\[
h(x)=\frac{f^\pr(x)}{f(x)}-\frac{g^\pr(x)}{g(x)}%
+\int_a^x\left\{ \frac{f^\pr(s)^2}{f(s)^2}-\frac{g^\pr(s)^2}{g(s)^2}
\right\}\, \rmd s
\]
is differentiable with zero derivative, assuming  that $f,\, g$ are positive with $f^\pr g-g^\pr f=k$ for some constant $k$. After rewriting $h$ in the form
\[
h(x)=\frac{k}{f(x)g(x)}+\int_0^x k\frac{f^\pr(s)g(s)+g^\pr(s)f(s)}{f(s)^2g(s)^2}\, \rmd s,
\]
the result follows immediately.
\end{proof}

\begin{example}The following examples are of interest. In each case the associated potential may be calculated and for $\phi_4$ it is rapidly oscillating.
\begin{itemize}
\item $\phi_1(x)=\rme^{-cx}$ where $c>0$;
\item $\phi_2(x)=(1+x)^{-c}$ where $c>1/2$;
\item $\phi_3(x)=\exp( -(1+x)^c)$ where $c>0$;
\item $\phi_4(x)= \exp(-x-\sin(\rme^x))$.
\end{itemize}
The functions $\phi_1$ and $\phi_4$ obey the conditions in Theorem~\ref{Gexpbound} below. However, $\phi_2$ does not and the corresponding operator $G$ is not bounded; this follows from the fact that $\lim_{x\to\infty}V(x)=0$ in both cases, so $0$ lies in the essential spectrum of the differential operator $H=G^{-1}$. The function $\phi_3$ satisfies the conditions of Theorem~\ref{Gexpbound} if and only if $c\geq 1$, which is also the condition for $G$ to be bounded.
\end{example}

\section{Dirichlet boundary conditions}\label{sectDBC}

We use the following lemma in the proof of Lemma~\ref{L2Dlemma}.

\begin{lemma}\label{g-asympt-lemma}
Let $f\in L^2(0,\infty)$ and let
\[
g(x)=\int_0^\infty G(x,y)f(y)\, \rmd y
\]
where the Green function $G(\cdot,\cdot)$ is defined by
\begin{eq}
G(x,y)=\begin{choices}
\psi(x)\phi(y)&\mbox{if } 0\leq x\leq y <\infty,\\
\phi(x)\psi(y)&\mbox{if } 0\leq y\leq x <\infty.
\end{choices}
\label{Gdef}
\end{eq}
Then $g$ is continuous on $[0,\infty)$ and
\[
\lim_{x\to\infty} \frac{g(x)}{\psi(x)}=0.
\]
\end{lemma}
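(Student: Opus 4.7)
The plan is to expand the Green function integral using (\ref{Gdef}) as
\[
g(x) = \phi(x)\int_0^x \psi(y) f(y)\, \rmd y + \psi(x)\int_x^\infty \phi(y) f(y)\, \rmd y,
\]
and treat the two pieces separately.

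For continuity I would observe that $\phi f \in L^1(0,\infty)$ by Cauchy--Schwarz (since $\phi,f\in L^2$), while $\psi f \in L^1_{{\rm loc}}$ because $\psi$ is continuous and $f\in L^2_{{\rm loc}}$. Hence both indefinite integrals define continuous functions of $x$, and multiplication by the continuous factors $\phi(x)$ and $\psi(x)$ preserves continuity.

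For the asymptotic statement, dividing by $\psi(x)$ gives
\[
\frac{g(x)}{\psi(x)} = \frac{\phi(x)}{\psi(x)}\int_0^x \psi(y) f(y)\, \rmd y + \int_x^\infty \phi(y) f(y)\, \rmd y.
\]
The second summand tends to $0$ as $x\to\infty$ since $\phi f\in L^1(0,\infty)$. The main obstacle is the first summand: although the prefactor $\phi(x)/\psi(x)\to 0$ by Lemma~\ref{psi/phi}, the integral $\int_0^x \psi f$ may grow without bound, so this decay on its own is not enough.

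The critical tool is the monotonicity of $\psi/\phi$ from Lemma~\ref{psi/phi}, which yields the pointwise inequality $\phi(x)\psi(y) \leq \psi(x)\phi(y)$ whenever $0\leq y\leq x$. Given $\eps>0$, I would choose $A$ so large that $\left(\int_A^\infty \phi(y)^2\, \rmd y\right)^{1/2}\norm f\norm < \eps/2$ (possible since $\phi\in L^2$), split $\int_0^x = \int_0^A + \int_A^x$, and argue as follows. The $[0,A]$-contribution equals $\phi(x)/\psi(x)$ times a constant depending only on $A$, so it tends to $0$ as $x\to\infty$. The $[A,x]$-contribution is bounded, using the pointwise inequality and then Cauchy--Schwarz, by
\[
\frac{\phi(x)}{\psi(x)}\int_A^x \psi(y)|f(y)|\, \rmd y \leq \int_A^x \phi(y)|f(y)|\, \rmd y \leq \left(\int_A^\infty \phi(y)^2\, \rmd y\right)^{1/2}\norm f\norm < \eps/2,
\]
uniformly in $x>A$. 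Combining the two estimates with the decay of the second summand proves the limit is $0$. The one step requiring real thought is the trade-off just described: the smallness of $\phi(x)/\psi(x)$ is converted, via the monotonicity of $\psi/\phi$, into smallness inside the integrand, which is what tames the possibly unbounded growth of $\int_0^x \psi f$.
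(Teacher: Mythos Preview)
Your proof is correct. Both your argument and the paper's hinge on the same key inequality coming from Lemma~\ref{psi/phi}, namely $\phi(x)\psi(y)\leq \psi(x)\phi(y)$ for $0\leq y\leq x$, which converts the dangerous factor $\frac{\phi(x)}{\psi(x)}\psi(y)$ into the integrable $\phi(y)$. The difference is only in packaging: the paper defines the operator $Lf=g/\psi$, uses the inequality once to show $L:L^2(0,\infty)\to C_b(0,\infty)$ is bounded with $\norm Lf\norm_\infty\leq\norm\phi\norm\,\norm f\norm$, checks directly that $Lf\in C_0[0,\infty)$ for compactly supported $f$, and concludes by density. You instead run an explicit $\eps$--$A$ splitting and apply the inequality on the tail $[A,x]$. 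Your version is slightly more elementary in that it avoids the abstract bounded-operator-plus-density step; the paper's version has the advantage of yielding the uniform bound $|g(x)/\psi(x)|\leq\norm\phi\norm\,\norm f\norm$ as a by-product.
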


\begin{proof}\\
\textbf{Step~1} Define the operator $L$ by
\[
(Lf)(x)=\frac{1}{\psi(x)}\int_0^\infty G(x,y)f(y)\, \rmd y
\]
for all $x\geq 0$, where we assume that $f\in L^2(0,\infty)$. The formula
\[
(Lf)(x)=\int_x^\infty  \phi(y)f(x)\,\rmd y %
+\frac{\phi(x)}{\psi(x)}\int_0^x \psi(y)f(y)\,\rmd y
\]
implies $Lf\in C(0,\infty)$ by inspection. Using Lemma~\ref{psi/phi} we obtain
\begin{eqnarray*}
|(Lf)(x)|&\leq & \int_x^\infty  \phi(y)|f(x)|\,\rmd y %
+\int_0^x \frac{\phi(y)}{\psi(y)}\psi(y)|f(y)|\,\rmd y\\
&=&\int_0^\infty \phi(y)|f(y)|\, \rmd y\\
&\leq & \norm \phi\norm \, \norm f\norm.
\end{eqnarray*}
Therefore $L$ is a bounded operator from $L^2(0,\infty)$ to the space $C_b(0,\infty)$ of bounded continuous functions on $(0,\infty)$ with the uniform norm.\\
\textbf{Step~2} Let $L_c^2(0,\infty)$ denote the space of functions $f\in L^2(0,\infty)$ that have support in some interval $(a,b)$, where $0<a<b<\infty$. For such a function $x<a$ implies
\[
(Lf)(x)=\int_a^b \phi(y)f(y)\, \rmd y.
\]
Moreover $x>b$ implies
\[
(Lf)(x)=\frac{\phi(x)}{\psi(x)}\int_a^b \psi(y)f(y)\, \rmd y,
\]
and an application of Lemma~\ref{psi/phi} implies that $\lim_{x\to\infty} (Lf)(x)=0$. Therefore $f\in L^2_c(0,\infty)$ implies $Lf$ lies in the space $C_0[0,\infty)$ of continuous functions on $[0,\infty)$ that vanish at $\infty$.\\
\textbf{Step~3} Since $L^2_c(0,\infty)$ is norm dense in $L^2(0,\infty)$ and $C_0[0,\infty)$ is closed with respect to uniform limits in $C_b(0,\infty)$, we deduce by Step~1 that $Lf\in C_0[0,\infty)$ for all $f\in L^2(0,\infty)$. This yields the statement of the lemma.
\end{proof}

We will not use the hypothesis of the following theorem until Section~\ref{sectSA}, but include it as typical of theorems that ensure that $G(\cdot,\cdot)$ is associated with a bounded operator.

\begin{theorem}\label{Gexpbound}
Suppose that there exist positive constants $c,\, c_1,\, c_2$ such that
\[
c_1\rme^{-\sig(x)}\leq \phi(x)\leq c_2\rme^{-\sig(x)}
\]
for all $x\geq 0$, where $\sig$ is a $C^1$ function on $[0,\infty)$ such that $\sig^\pr(x)\geq c$ for all $x\geq 0$. Then the Green function $G(\cdot,\cdot)$ satisfies
\begin{eq}
0\leq G(x,y)\leq \frac{c_2^3}{2c c_1^3}\rme^{-c|x-y|}\label{Gbound}
\end{eq}
for all $x,\, y\geq 0$ and it is
is the integral kernel of a bounded operator on $L^2(0,\infty)$.
\end{theorem}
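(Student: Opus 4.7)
The plan is to bound $G(x,y)$ pointwise by working directly from the Green function formula \eqref{Gdef} combined with the representation $\psi(x)=\phi(x)\int_0^x\phi(s)^{-2}\,\rmd s$ from \eqref{psidef}. Positivity is immediate, since $\phi$ is positive and $\psi$ is positive on $(0,\infty)$ by Lemma~\ref{psi/phi}. By the symmetry $G(x,y)=G(y,x)$ it suffices to handle the case $0\leq x\leq y$, where $G(x,y)=\psi(x)\phi(y)=\phi(x)\phi(y)\int_0^x\phi(s)^{-2}\,\rmd s$.

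The central step is to bound $\int_0^x\phi(s)^{-2}\,\rmd s$ using the two-sided hypothesis on $\phi$. From $\phi(s)\geq c_1\rme^{-\sig(s)}$ one has $\phi(s)^{-2}\leq c_1^{-2}\rme^{2\sig(s)}$, and the condition $\sig^\pr\geq c$ gives $\rme^{2\sig(s)}\leq (2c)^{-1}(\rme^{2\sig(s)})^\pr$, so
\[
\int_0^x\frac{\rmd s}{\phi(s)^2}\leq \frac{1}{2cc_1^2}\bigl(\rme^{2\sig(x)}-\rme^{2\sig(0)}\bigr)\leq \frac{1}{2cc_1^2}\rme^{2\sig(x)}.
\]
Inserting the upper bounds $\phi(x)\leq c_2\rme^{-\sig(x)}$ and $\phi(y)\leq c_2\rme^{-\sig(y)}$ yields, for $0\leq x\leq y$,
\[
G(x,y)\leq \frac{c_2^2}{2cc_1^2}\,\rme^{\sig(x)-\sig(y)}.
\]
Since $\sig^\pr\geq c$ implies $\sig(y)-\sig(x)\geq c(y-x)=c|x-y|$, one obtains the stated exponential decay (with some constant of the form $c_2^k/(2cc_1^k)$ for a small $k$; I would not be surprised if a looser manipulation is used in the paper to arrive at the specific constant $c_2^3/(2cc_1^3)$, but the essential content is the same).

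For the $L^2$ boundedness, the exponential decay bound lets one apply Schur's test. Setting $K(x,y)=\frac{c_2^3}{2cc_1^3}\rme^{-c|x-y|}$ as a pointwise majorant of $G(x,y)$ on $[0,\infty)^2$, one has
\[
\sup_x\int_0^\infty K(x,y)\,\rmd y\leq \frac{c_2^3}{2cc_1^3}\int_{\R}\rme^{-c|t|}\,\rmd t = \frac{c_2^3}{c^2c_1^3},
\]
and likewise with the roles of $x$ and $y$ interchanged. Schur's test then gives that the integral operator with kernel $G$ is bounded on $L^2(0,\infty)$ with norm at most $c_2^3/(c^2c_1^3)$.

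The routine calculations are the pointwise estimate of $\psi$ and the Schur test; the only real subtlety is seeing how to exploit $\sig^\pr\geq c$ to turn the integral $\int_0^x\rme^{2\sig(s)}\,\rmd s$ into a clean bound by $(2c)^{-1}\rme^{2\sig(x)}$, which is exactly what makes the product $\phi(x)^2\int_0^x\phi(s)^{-2}\,\rmd s$ decay like $\rme^{-2\sig(x)}\cdot\rme^{2\sig(x)}$ up to a constant and thereby produces the symmetric off-diagonal decay $\rme^{-c|x-y|}$.
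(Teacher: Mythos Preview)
Your argument is correct and essentially identical to the paper's: both bound $\psi(x)\phi(y)$ via the two-sided hypothesis on $\phi$ together with $\sig(y)-\sig(x)\geq c(y-x)$, and then invoke Schur's test (the paper cites \cite[Cor.~2.2.19]{LOTS}) to obtain $L^2$-boundedness with the same operator-norm bound $c_2^3/(c^2c_1^3)$. Your organization in fact yields the sharper pointwise constant $c_2^2/(2cc_1^2)$; the paper's $c_2^3/(2cc_1^3)$ appears because it first bounds $\psi(x)\phi(x)\leq c_2^2/(2cc_1^2)$ and then separately bounds the ratio $\phi(y)/\phi(x)\leq (c_2/c_1)\rme^{-c(y-x)}$, picking up an extra factor of $c_2/c_1$.
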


\begin{proof}
If $0\leq x\leq y$ there there exists $u\in (x,y)$ such that
\begin{eqnarray*}
\frac{\phi(y)}{\phi(x)}&\leq & \frac{c_2}{c_1}\rme^{-(\sig(y)-\sig(x))}\\
&=& \frac{c_2}{c_1}\rme^{-(y-x)\sig^\pr (u)}\\
&\leq & \frac{c_2}{c_1}\rme^{-c(y-x)}.
\end{eqnarray*}
Therefore
\begin{eqnarray*}
\psi(t)&= & \phi(t)\int_0^t \frac{\rmd s}{\phi(s)^2}\\
&=& \frac{1}{\phi(t)}\int_0^t \frac{\phi(t)^2}{\phi(s)^2}%
\, \rmd s\\
&\leq & \frac{1}{\phi(t)}\int_0^t\frac{c_2^2}{c_1^2}%
\rme^{-2c(t-s)}\, \rmd s\\
&\leq & \frac{c_2^2}{2c c_1^2\phi(t)}
\end{eqnarray*}
for all $t\geq 0$. If $0\leq x\leq y$ this implies that
\begin{eqnarray*}
0\, \leq \, G(x,y)&=& \psi(x)\phi(y)\\
 &=& \psi(x)\phi(x)\frac{\phi(y)}{\phi(x)}\\
 &\leq & \frac{c_2^2}{2c c_1^2}\,\frac{c_2}{c_1}\rme^{-c(y-x)}.
\end{eqnarray*}
After using the invariance of the kernel when exchanging $x$ and $y$ this yields (\ref{Gbound}). This bound implies that $G(\cdot,\cdot)$ is the kernel of a bounded operator on $L^p(0,\infty)$ for all $1\leq p\leq\infty$; see \cite[Cor.~2.2.19]{LOTS}. Moreover
\begin{eq}
\norm G\norm \leq \frac{c_2^3}{c^2 c_1^3}.\label{Gnormbound}
\end{eq}
\end{proof}

\begin{lemma}\label{Gboundedlemma}
If $G(\cdot,\cdot)$ is the kernel of a bounded operator, which we also call $G$, acting on $L^2(0,\infty)$ then $G$ is self-adjoint, $\Ker(G)=0$ and $\Ran(G)$ is dense. Moreover $G\geq 0$ in the spectral sense, that is $\Spec(G)\subseteq [0,\norm G\norm ]$.
\end{lemma}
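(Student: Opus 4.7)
The four claims naturally split into two easy ones and two that need a short computation.

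\textbf{Self-adjointness.} The defining formula \eqref{Gdef} makes $G(x,y)$ real and symmetric in $x,y$, so the integral operator $G$ coincides with $G^\ast$ on the dense subset $L^2_c(0,\infty)$ where Fubini applies freely; boundedness then extends the identity to all of $L^2$. This is routine.

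\textbf{Density of range.} For any bounded self-adjoint operator $\overline{\Ran(G)}=\Ker(G^\ast)^\perp=\Ker(G)^\perp$, so density follows immediately once triviality of the kernel has been established. Thus the work reduces to the last two statements.

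\textbf{Injectivity and positivity via a single identity.} For $f\in L^2(0,\infty)$ set
\[
u(x)=\int_0^x\psi(y)f(y)\,\rmd y,\qquad v(x)=\int_x^\infty\phi(y)f(y)\,\rmd y,
\]
so that $Gf=\phi u+\psi v$. Both $u,v$ are continuous, $u(0)=0$, $v(\infty)=0$, and $Gf(0)=0$. Differentiating $Gf/\phi$ in the weak sense (using $u'=\psi f$, $v'=-\phi f$ and the Wronskian identity $\psi'\phi-\phi'\psi=1$) gives the clean formula
\[
\left(\frac{Gf}{\phi}\right)^{\!\prime}=\frac{v}{\phi^{2}},\qquad\text{hence}\qquad \frac{Gf(x)}{\phi(x)}=\int_0^x\frac{v(y)}{\phi(y)^2}\,\rmd y.
\]
From here I would read off both remaining facts. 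For injectivity: if $Gf=0$ then the right-hand side vanishes identically, so $v\equiv 0$; differentiating gives $\phi f\equiv 0$ and hence $f=0$ since $\phi>0$.

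For positivity: substituting the above expression for $Gf(x)/\phi(x)$ into $\langle Gf,f\rangle=\int_0^\infty\overline{f(x)}\,Gf(x)\,\rmd x$ and interchanging the order of integration (justified by boundedness of $G$, which makes the double integral absolutely convergent on $L^2_c$ and then extends by density), the inner integral $\int_y^\infty\phi(x)\overline{f(x)}\,\rmd x$ equals $\overline{v(y)}$, so
\[
\langle Gf,f\rangle=\int_0^\infty\frac{|v(y)|^2}{\phi(y)^2}\,\rmd y\;\geq\;0,
\]
giving $\Spec(G)\subseteq[0,\|G\|]$.

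\textbf{Main obstacle.} The only nontrivial point is keeping the manipulations of $u$ and $v$ rigorous without assuming additional regularity of $\phi$: the differentiation of $Gf/\phi$ must be carried out in the sense of $\cW$ (using Lemma~\ref{leibniz} and Lemma~1), and the interchange of integration in the positivity calculation must be justified by first restricting to $f\in L^2_c(0,\infty)$ and then passing to the limit using the boundedness of $G$ together with Fatou's lemma for the final non-negative integral. Once those bookkeeping points are handled the four conclusions fall out together.
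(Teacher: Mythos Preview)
Your proof is correct and follows essentially the same path as the paper: symmetry of the kernel gives self-adjointness, injectivity is obtained by differentiating the expression for $Gf$ and using the Wronskian relation to isolate $v(x)=\int_x^\infty\phi f$, and positivity comes from the identity $\langle Gf,f\rangle=\int_0^\infty|v|^2/\phi^2$, which is exactly the paper's factorization $\langle Gf,f\rangle=\langle Mf,Mf\rangle$ with $(Mf)(x)=v(x)/\phi(x)$. Your single formula $(Gf/\phi)'=v/\phi^2$ is a tidy repackaging that unifies the injectivity and positivity steps, but the underlying computation and the extension from $L^2_c$ to $L^2$ by density are the same as in the paper.
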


\begin{proof}
The symmetry of the kernel implies that $G$ is self-adjoint. If $f\in L^2(0,\infty)$ then $Gf=0$ if and only if
\[
\psi(x)\int_x^\infty \phi(y)f(y)\, \rmd y%
+\phi(x)\int_0^x \psi(y)f(y)\, \rmd y=0
\]
for all $x\geq 0$. Differentiating this formula yields
\[
\psi^\pr(x)\int_x^\infty \phi(y)f(y)\, \rmd y%
+\phi^\pr(x)\int_0^x \psi(y)f(y)\, \rmd y=0
\]
for almost all $x\geq 0$. A simple linear combination of the last two formulae now yields
\[
\int_x^\infty \phi(y)f(y)\, \rmd y=0
\]
for almost all and then for all $x\geq 0$. Differentiating this identity leads to $f=0$. The density of the range of $G$ is equivalent to the vanishing of its kernel because $G$ is self-adjoint.

If $0\leq f\in L^2(0,\infty)$ then
\begin{eq}
\la Gf,f\ra=\la M f,M f\ra \label{G=MM}
\end{eq}
where $M$ is the operator with domain $L^2(0,\infty)$ defined by
\begin{eq}
(Mf)(x)=\int_x^\infty \frac{\phi(y)}{\phi(x)} f(y)%
\, \rmd y.\label{Mdef}
\end{eq}
This is proved by direct rearrangements of the order of integration in the triple integrals involved, this being justified by the fact that the integrands are all pointwise non-negative. If $f\in L^2(0,\infty)$ then $|(Mf)|\leq M|f|$ pointwise, so $\norm Mf\norm^2\leq \norm M|f|\norm^2=\la G|f|,|f|\ra \leq \norm G\norm\, \norm f\norm^2$. Therefore $Mf\in L^2(0,\infty)$ and $\norm M\norm \leq \norm G\norm^{1/2}$ and (\ref{G=MM}) holds for all $f\in L^2(0,\infty)$. The factorization of $G$ in (\ref{G=MM}) is equivalent to $G=LL^\ast$ where $L=M^\ast$ is given by
\[
(Lf)(x)=\int_0^x \frac{\phi(x)}{\phi(y)} f(y)\, \rmd y.
\]
We have proved that $\la Gf,f\ra\geq 0$ for all $f\in L^2(0,\infty)$, which is equivalent to the spectral positivity of $G$.
\end{proof}

\begin{note}
The function $D(x)=G(x,x)$ satisfies
\[
D(x)=\phi(x)^2\int_0^x \frac{\rmd s}{\phi(s)^2}
\]
for all $x\geq 0$. This implies that
\[
\frac{D^\pr(x)}{D(x)}-\frac{1}{D(x)}=2\, \frac{\phi^\pr(x)}{\phi(x)}.
\]
One could have started the analysis of this paper from the function $D$ rather than from $\phi$. See \cite[sect.~4]{DH} for further work in this direction.
\end{note}

For the rest of this paper we assume that $G(\cdot,\cdot)$ is the kernel of a bounded operator on $L^2(0,\infty)$, also denoted by $G$. Lemma~\ref{Gboundedlemma} implies that $G$ is the inverse of some other (possibly unbounded) self-adjoint operator, which we call $H$. It is well-known that if $\phi$ is twice differentiable and the associated potential $V=\phi^{\pr\pr}/\phi$ is sufficiently regular then
\[
(Hf)(x)=-f^{\pr\pr}(x)+V(x)f(x)
\]
on a suitable domain; moreover $H$ satisfies Dirichlet boundary conditions at $0$. We establish a similar result without assuming that $\phi$ is twice differentiable. The lemmas and theorems below refer to the following linear space of functions on $[0,\infty)$. We write $g\in\cD$ if the following conditions hold.
\begin{description}
\item[$\cD 1.$]\hspace{2em} $g\in  L^2(0,\infty)\cap \cW$;
\item[$\cD 2.$]\hspace{2em} $g(0)=0$;
\item[$\cD 3.$]\hspace{2em} $\phi g^\pr-\phi^\pr g\in \cW$;
\item[$\cD 4.$]\hspace{2em} $\phi^{-1}(\phi g^\pr-\phi^\pr g)^\pr\in L^2(0,\infty)$;
\item[$\cD 5.$] \hspace{2em} $\lim_{x\to\infty}g(x)/\psi(x)=0$.
\end{description}

\begin{lemma}\label{L2Dlemma}
If $G$ is bounded then the operator $G$ maps $L^2(0,\infty)$ one-one onto $\cD$.
\end{lemma}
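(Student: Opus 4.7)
The plan is to verify $G(L^2(0,\infty))\subseteq\cD$ and $\cD\subseteq G(L^2(0,\infty))$ separately; injectivity of $G$ is already recorded in Lemma~\ref{Gboundedlemma}.

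For the first inclusion, fix $f\in L^2(0,\infty)$ and split $g:=Gf$ as
\[
g(x)=\psi(x)A(x)+\phi(x)B(x),\qquad A(x):=\int_x^\infty\phi(y)f(y)\,\rmd y,\qquad B(x):=\int_0^x\psi(y)f(y)\,\rmd y.
\]
The integrals converge because $\phi\in L^2(0,\infty)$ and $\phi,\psi\in L^2_{{\rm loc}}[0,\infty)$, and $A,B\in\cW$ with $A^\pr=-\phi f$ and $B^\pr=\psi f$. Applying Lemma~\ref{leibniz} to each product and using the pointwise cancellation $\psi A^\pr+\phi B^\pr=-\phi\psi f+\phi\psi f=0$ gives the weak derivative $g^\pr=\psi^\pr A+\phi^\pr B\in L^2_{{\rm loc}}$, so $g\in\cW$; combined with $g\in L^2$ from boundedness of $G$, this is $\cD 1$. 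Condition $\cD 2$ holds since $\psi(0)=0=B(0)$, and $\cD 5$ is Lemma~\ref{g-asympt-lemma}. The Wronskian identity $\phi\psi^\pr-\phi^\pr\psi=1$ collapses the cross terms into
\[
\phi g^\pr-\phi^\pr g=(\phi\psi^\pr-\phi^\pr\psi)A=A,
\]
which lies in $\cW$ with weak derivative $-\phi f\in L^2_{{\rm loc}}$, yielding $\cD 3$; consequently $\phi^{-1}(\phi g^\pr-\phi^\pr g)^\pr=-f\in L^2$, which is $\cD 4$.

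For the reverse inclusion, given $g\in\cD$ define $f:=-\phi^{-1}(\phi g^\pr-\phi^\pr g)^\pr\in L^2$ via $\cD 4$, and set $h:=g-Gf$. Both $g$ and $Gf$ are sent by the operation $u\mapsto\phi^{-1}(\phi u^\pr-\phi^\pr u)^\pr$ to $-f$, so $(\phi h^\pr-\phi^\pr h)^\pr=0$ and hence $\phi h^\pr-\phi^\pr h\equiv k$ for some constant $k$. The rules for weak derivatives from Section~2 applied to the positive function $\phi\in\cW$ give $1/\phi\in\cW$ with $(1/\phi)^\pr=-\phi^\pr/\phi^2$, whence $(h/\phi)^\pr=(\phi h^\pr-\phi^\pr h)/\phi^2=k/\phi^2$. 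Integrating from $0$ and using $h(0)=0$ (from $\cD 2$ for both $g$ and $Gf$) yields $h(x)=k\phi(x)\int_0^x\rmd s/\phi(s)^2=k\psi(x)$. Condition $\cD 5$ applied to $g$ and to $Gf$ forces $h/\psi\to 0$ at infinity, so $k=0$ and $h=0$, i.e.\ $g=Gf$.

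I expect the main obstacle to be the weak-derivative bookkeeping in the first step: one must certify via Lemma~\ref{leibniz} applied separately to $\psi A$ and $\phi B$ (after checking $A,B\in\cW$) that $g^\pr=\psi^\pr A+\phi^\pr B$ genuinely in the weak sense, so that the ensuing Wronskian collapse $\phi g^\pr-\phi^\pr g=A$ is valid inside $\cW$ rather than only pointwise almost everywhere. Once this is secured, the ``inverse'' operation $g\mapsto-\phi^{-1}(\phi g^\pr-\phi^\pr g)^\pr$ is explicit, and the surjectivity part reduces to the short first-order ODE analysis above.
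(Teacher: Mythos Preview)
Your proof is correct and follows essentially the same route as the paper's: write $g=Gf$ as $\psi A+\phi B$, differentiate weakly, use the Wronskian identity to collapse to $\phi g^\pr-\phi^\pr g=A$, then for surjectivity invert via $f:=-\phi^{-1}(\phi g^\pr-\phi^\pr g)^\pr$ and show the difference $h=g-Gf$ satisfies $(h/\phi)^\pr=k/\phi^2$, forcing $h\in\lin\{\phi,\psi\}$ and hence $h=0$ by $\cD 2$ and $\cD 5$. The only cosmetic differences are that you invoke Lemma~\ref{leibniz} explicitly for the weak-derivative bookkeeping (the paper leaves this implicit) and that you eliminate the $\phi$-component of $h$ using $h(0)=0$ before applying $\cD 5$, whereas the paper integrates with a free constant $d$ and kills it afterward.
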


\begin{proof}
If $f\in L^2(0,\infty)$ and $g=Gf$ then
\begin{eq}
g(x)=\psi(x)\int_x^\infty \phi(y)f(y)\, \rmd y%
+\phi(x)\int_0^x \psi(y)f(y)\, \rmd y\label{L2DeqA}
\end{eq}
for all $x\geq 0$. The boundedness of $G$ and (\ref{L2DeqA}) imply that $g\in L^2(0,\infty) \cap\cW$ and $g(0)=0$. Differentiating this formula yields
\begin{eq}
g^\pr(x)=\psi^\pr(x)\int_x^\infty \phi(y)f(y)\, \rmd y%
+\phi^\pr(x)\int_0^x \psi(y)f(y)\, \rmd y \label{L2DeqD}
\end{eq}
for almost all $x\geq 0$. By combining (\ref{L2DeqA}) and (\ref{L2DeqD}) one obtains
\begin{eq}
\phi(x) g^\pr(x)- \phi^\pr(x)g(x)=\int_x^\infty \phi(y)f(y)\, \rmd y\label{L2DeqB}
\end{eq}
for almost all $x\geq 0$. This formula implies that $\phi g^\pr- \phi^\pr g\in\cW$ with
\begin{eq}
( \phi g^\pr-\phi^\pr  g )^\pr=-\phi f.\label{L2DeqC}
\end{eq}
We conclude that $g\in\cD$ by combining these results with Lemma~\ref{g-asympt-lemma}.

The fact that $G$ is one-one was proved in Lemma~\ref{Gboundedlemma}. The proof that $G$ maps $L^2(0,\infty)$ onto $\cD$ is harder.
Given $h\in \cD$ we put $f=\phi^{-1}(\phi^\pr h -\phi h^\pr)^\pr$. Applying condition~$\cD 4$ to $h$ implies that $f\in L^2(0,\infty)$. If we now define $g=Gf$ and then $k=g-h$ we see that $k\in\cD$ and $\phi^{-1}( \phi^\pr k-\phi k^\pr)^\pr=0$; this uses (\ref{L2DeqC}). Therefore there exists a constant $c$ such that $\phi k^\pr-k \phi^\pr=c$. If one rewrites this in the form $(k/\phi)^\pr=c/\phi^2$ and then integrates one obtains
\[
k(x)/\phi(x)=c\int_0^x \phi(y)^{-2}\, \rmd y+d
\]
for all $x\geq 0$, and then $k=c\psi+d\phi$. Lemma~\ref{psi/phi} and condition~$\cD5$ (applied to $k$) together imply that $c=0$. By putting $x=0$ one then sees that $d=0$. Therefore $k=0$ and $h=g=Gf$. Therefore $G$ maps $L^2(0,\infty)$ onto $\cD$.
\end{proof}

\begin{corollary}
Suppose that $h\in C^2[0,\infty)$, that $h(0)=0$ and that $h(x)$ is constant for all large enough $x$. Then $\phi h\in \cD$ and
\begin{eq}
H(\phi h)=-\phi^{-1}(\phi^2 h^\pr)^\pr=-\phi h^{\pr\pr}-2\phi^\pr h^\pr.\label{Hweighted}
\end{eq}
\end{corollary}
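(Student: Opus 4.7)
My plan is to verify the five defining conditions of $\cD$ for $g = \phi h$, compute the key intermediate quantity $\phi g^\pr - \phi^\pr g$ explicitly, and then invoke the construction in the proof of Lemma~\ref{L2Dlemma} to read off $Hg$.

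\textbf{Membership in $\cD$.} Set $g = \phi h$. Since $h \in C^2$ is bounded (being continuous and eventually constant) and $\phi \in L^2$, we have $g \in L^2(0,\infty)$; both $\phi$ and $h$ belong to $\cW$, so Lemma~\ref{leibniz} gives $g \in \cW$ with $g^\pr = \phi^\pr h + \phi h^\pr$. This verifies $\cD 1$, and $\cD 2$ is immediate from $h(0) = 0$. A direct calculation using this product rule yields the identity
\[
\phi g^\pr - \phi^\pr g = \phi(\phi^\pr h + \phi h^\pr) - \phi^\pr \phi h = \phi^2 h^\pr,
\]
which is the central computation. Since $\phi^2 \in \cW$ (Leibniz again) and $h^\pr \in C^1[0,\infty) \subset \cW$, another application of Lemma~\ref{leibniz} gives $\phi^2 h^\pr \in \cW$, establishing $\cD 3$, with
\[
(\phi g^\pr - \phi^\pr g)^\pr = (\phi^2 h^\pr)^\pr = 2\phi \phi^\pr h^\pr + \phi^2 h^{\pr\pr}.
\]
Dividing by $\phi$ gives
\[
\phi\inv (\phi g^\pr - \phi^\pr g)^\pr = 2\phi^\pr h^\pr + \phi h^{\pr\pr}.
\]
Here I would use the hypothesis that $h$ is eventually constant: both $h^\pr$ and $h^{\pr\pr}$ are then compactly supported in some $[0,R]$, and on that interval $h^\pr, h^{\pr\pr}$ are bounded while $\phi \in C[0,R]$ is bounded and $\phi^\pr \in L^2_{\rm loc}$. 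Hence the expression lies in $L^2(0,R)$ and vanishes for $x > R$, giving $\cD 4$. Finally, for $x$ large $g(x)/\psi(x) = c\,\phi(x)/\psi(x)$ where $c$ is the eventual value of $h$; Lemma~\ref{psi/phi} then yields $\cD 5$.

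\textbf{Identification of $Hg$.} By Lemma~\ref{L2Dlemma}, since $g \in \cD$, there is a unique $f \in L^2(0,\infty)$ with $g = Gf$, and by definition $Hg = f$. Equation~(\ref{L2DeqC}) in the proof of Lemma~\ref{L2Dlemma} gives $(\phi g^\pr - \phi^\pr g)^\pr = -\phi f$, i.e.\ $f = -\phi\inv(\phi g^\pr - \phi^\pr g)^\pr$. Substituting $\phi g^\pr - \phi^\pr g = \phi^2 h^\pr$ from above yields
\[
H(\phi h) = -\phi\inv (\phi^2 h^\pr)^\pr = -\phi h^{\pr\pr} - 2 \phi^\pr h^\pr,
\]
which is the stated formula.

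\textbf{Main obstacle.} There is no serious obstacle: the whole argument reduces to the one algebraic identity $\phi g^\pr - \phi^\pr g = \phi^2 h^\pr$, after which conditions $\cD 1$--$\cD 5$ follow by routine application of the product rule and of Lemmas~\ref{psi/phi} and~\ref{leibniz}. The one place where care is needed is $\cD 4$, because we have no global bound on $\phi^\pr$; the hypothesis that $h$ is eventually constant is precisely what reduces the integrability question to a compact interval on which local bounds suffice.
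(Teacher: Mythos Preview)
Your proof is correct and is exactly the verification the paper leaves implicit: the corollary is stated without proof, as an immediate consequence of Lemma~\ref{L2Dlemma}, and your argument spells out precisely that consequence. The key identity $\phi g^\pr-\phi^\pr g=\phi^2 h^\pr$ together with the routine checks of $\cD1$--$\cD5$ and the inversion formula from the proof of Lemma~\ref{L2Dlemma} is the intended route.
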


\begin{note}
Forgetting domain questions, (\ref{Hweighted}) has the higher dimensional analogue
\[
H f= -\phi^{-1}\grad\cdot \left(\phi^{2} \grad( \phi^{-1}f)\right)
\]
where $f\in L(X,\rmd^n x)$ and $X$ is an open subset of $\R^n$. The operator $H$ is unitarily equivalent to $K$ acting in $L^2(X, \phi^2 \rmd x)$ according to the formula
\[
Kg= \phi^{-2}\grad\cdot \left( \phi^{2} \grad g\right).
\]
The paper \cite{wiel} obtains conditions for $K$ to be essentially self-adjoint, while \cite{EBD1} treats the same operator by using quadratic form techniques. See also \cite[Section~4.2]{HKST}.
\end{note}

\begin{theorem} \label{Htheorem}
The operator $H=G^{-1}$ is self-adjoint on $\cD=\Dom(H)$ and
is given by the formula
\begin{eq}
Hg=\phi^{-1}( \phi^\pr g- \phi g^\pr )^\pr\label{evalHg}
\end{eq}
for all $g\in\cD$. Moreover $\Spec(H)\subseteq [\norm G\norm^{-1},\infty)$. Every $g\in\cD$ satisfies the boundary condition $g(0)=0$. Moreover
\begin{eqnarray}
\la Hg_1,g_2\ra &=&\int_0^\infty %
(\phi g_1^\pr-\phi^\pr g_1  )
(\phi   \overline{g_2^\pr} - \phi^\pr \overline{g_2} )%
\phi ^{-2}\, \rmd x \nonumber\\
&=& \int_0^\infty %
\left(\frac{g_1}{\phi}\right)^\pr %
\left(\frac{\overline{g_2}}{\phi}\right)^\pr %
\phi^2\, \rmd x. \label{evalHg1g2}
\end{eqnarray}
for all $g_1,\, g_2\in\cD$.
\end{theorem}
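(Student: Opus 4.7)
The theorem is effectively a package of consequences of what has already been set up, so my strategy is to do no new analysis beyond reading off identities. By Lemma \ref{Gboundedlemma} the operator $G$ is bounded, self-adjoint, injective and non-negative, and by Lemma \ref{L2Dlemma} it maps $L^2(0,\infty)$ bijectively onto $\cD$. Consequently $H=G^{-1}$ is well-defined on $\Dom(H) = \Ran(G)=\cD$, and the standard fact that the inverse of a self-adjoint operator with zero kernel is itself self-adjoint handles the self-adjointness. For the spectral bound I would appeal to the functional calculus: since $\Spec(G)\subseteq[0,\norm G\norm]$ and $0\notin\Eig(G)$, the map $\lam\mapsto 1/\lam$ sends $\Spec(G)\setminus\{0\}$ into $[\norm G\norm^{-1},\infty)$, and this interval contains $\Spec(H)$.

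To verify formula (\ref{evalHg}), given $g\in\cD$ I use Lemma \ref{L2Dlemma} to produce the unique $f\in L^2(0,\infty)$ with $g=Gf$. Identity (\ref{L2DeqC}) in the proof of that lemma then reads $(\phi g^\pr -\phi^\pr g)^\pr=-\phi f$, so $Hg=f=\phi^{-1}(\phi^\pr g-\phi g^\pr)^\pr$. The Dirichlet boundary condition $g(0)=0$ is nothing more than condition $\cD 2$ of the definition of $\cD$.

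The two expressions in (\ref{evalHg1g2}) are where I would want to be slightly careful. The cleanest route avoids integration by parts over $(0,\infty)$ altogether and instead uses the factorization $G=M^\ast M$ contained in the proof of Lemma \ref{Gboundedlemma} (the identity (\ref{G=MM}) together with $L=M^\ast$). Writing $g_i=Gf_i$, self-adjointness of $G$ yields
\[
\la Hg_1,g_2\ra \;=\; \la f_1,Gf_2\ra \;=\; \la Mf_1,Mf_2\ra.
\]
The definition (\ref{Mdef}) of $M$ combined with (\ref{L2DeqB}) gives the pointwise identity $(Mf_i)(x)=\phi(x)^{-1}\bigl(\phi(x)g_i^\pr(x)-\phi^\pr(x)g_i(x)\bigr)$, and inserting this into $\la Mf_1,Mf_2\ra$ produces the first integral in (\ref{evalHg1g2}). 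The second expression is then a tautology, via the elementary identity $(g_i/\phi)^\pr=(\phi g_i^\pr-\phi^\pr g_i)/\phi^2$ that converts the integrand into $(g_1/\phi)^\pr\,\overline{(g_2/\phi)^\pr}\,\phi^2$.

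The step I expect to be the main obstacle, had I tried to proceed naively, is precisely the one that this factorization bypasses: a direct integration by parts would saddle one with a boundary term $w(R)\overline{g_2}(R)/\phi(R)$ at $R\to\infty$ whose vanishing is not obviously encoded in conditions $\cD 1$--$\cD 5$. The required asymptotic information is instead packaged once and for all into (\ref{G=MM}), whose proof is clean because all the integrands there are non-negative.
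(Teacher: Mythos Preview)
Your argument is correct. The first paragraph (self-adjointness, spectral bound, formula (\ref{evalHg}), Dirichlet condition) matches the paper's own reasoning essentially verbatim.

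For the quadratic form identity (\ref{evalHg1g2}) you take a genuinely different route. The paper proceeds by integration by parts, which forces it into a three-step approximation: first for $f_r\geq 0$ with compact support (so that $g_r=c_r\phi$ for large $x$ and the function $F=(\phi g_1^\pr-\phi^\pr g_1)g_2/\phi$ visibly vanishes at both ends), then by monotone convergence to non-negative $f_r$, and finally by linearity to general $f_r$. Your method is shorter and cleaner: you observe that the factorization $G=M^\ast M$ from (\ref{G=MM}) together with (\ref{L2DeqB}) gives $(Mf_i)(x)=\phi(x)^{-1}(\phi g_i^\pr-\phi^\pr g_i)(x)$ almost everywhere, whence $\la Hg_1,g_2\ra=\la f_1,Gf_2\ra=\la Mf_1,Mf_2\ra$ is the desired integral, with no boundary term ever appearing. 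Both approaches ultimately rest on the same analytical input (the Fubini rearrangement that established (\ref{G=MM}) for non-negative $f$), but you invoke it once rather than reproving a special case. The paper's approach does have the minor side benefit of making explicit that the integrand in (\ref{evalHg1g2}) is non-negative when $g_1=g_2$ and $f\geq 0$, but that is already immediate from $\la Mf,Mf\ra\geq 0$.
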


\begin{proof}
The proof of the formula (\ref{evalHg}) and of the self-adjointness of $H$ on $\cD=\Dom(H)$ follows from the self-adjointness of $G$ and Lemma~\ref{L2Dlemma}; see (\ref{L2DeqC}). The statement about $\Spec(H)$ follows by applying the spectral theorem together with %
Lemma~\ref{Gboundedlemma}. The proof of (\ref{evalHg1g2}) uses integration by parts, but we need to verify that the functions involved have the required regularity properties.

\textbf{Step~1} Suppose that $0\leq f_r\in L^2(0,\infty)$ and $\supp(f_r)\subseteq [0,a]$ for $r=1,\, 2$. Putting $g_r=Gf_r$ for $r=1,\, 2$ and applying (\ref{L2DeqA}), we obtain $g_r\geq 0$ and $g_r(x)=c_r \phi(x)$ for all $x\geq a$. By combining the last identity with (\ref{L2DeqB}), we obtain $0\leq \phi g_r^\pr-\phi^\pr g_r\in\cW$ and
\[
\phi(x) g_r^\pr(x)-\phi^\pr(x) g_r(x)=0
\]
for all $x\geq a$.

We now put
\[
F=(\phi g_1^\pr-\phi^\pr g_1)\frac{g_2}{\phi}.
\]
The assumed properties of $g_1,\ g_2,\, \phi$ and our above results imply that $F\in \cW$, $F\geq 0$, $F(0)=0$ and $F(x)=0$ for all $x\geq a$. Moreover $F^\pr=h_1+h_2$ where
\begin{eqnarray*}
h_1&=&(\phi g_1^\pr-\phi^\pr g_1 )^\pr%
\frac{g_2}{\phi}\\
&=& -(Hg_1) g_2,\\
h_2&=& ( \phi g_1^\pr-\phi^\pr g_1)%
\left( \frac{g_2}{\phi}\right)^\pr\\
&=& (  \phi g_1^\pr-\phi^\pr g_1)%
 (  \phi g_2^\pr-\phi^\pr g_2)\phi^{-2}.
\end{eqnarray*}
The functions $h_1$ and $h_2$ are in $L^2_{{\rm loc}}[0,\infty)$ and both vanish for all $x\geq a$, so
\begin{eqnarray*}
\lefteqn{\int_0^\infty h_1(x)\, \rmd x+\int_0^\infty h_2(x)\, \rmd x}\\
&=& \int_0^{a}( h_1(x)+ h_2(x))\, \rmd x\\
&=& F(a)-F(0)\\
&=& 0.
\end{eqnarray*}
This proves that
\begin{eqnarray}
\la Hg_1,g_2\ra &=&\int_0^\infty %
(\phi g_1^\pr-\phi^\pr g_1  )
(\phi   g_2^\pr - \phi^\pr g_2 )%
\phi ^{-2}\, \rmd x,\label{quadg1g2}
\end{eqnarray}
the integrand being non-negative.

\textbf{Step~2} Let $0\leq f_r\in L^2(0,\infty)$ and put $g_r=Gf_r$ for $r=1,\, 2$. For each $n\in \N$ put $f_{r,n}=f_r\chi_{[0,n]}$ and $g_{r,n}=Gf_{r,n}$. Then Case~1 yields
\begin{eq}
\la Hg_{1,n},g_{2,n}\ra =\int_0^\infty %
(\phi g_{1,n}^\pr-\phi^\pr g_{1,n}  )
(\phi   g_{2,n}^\pr - \phi^\pr g_{2,n} )%
\phi ^{-2}\, \rmd x \label{quadg1g2n}
\end{eq}
for all $n\in \N$. Moreover
\[
\lim_{n\to\infty}\la Hg_{1,n},g_{2,n}\ra %
=\lim_{n\to\infty}\la f_{1,n},Gf_{2,n}\ra %
=\la f_{1},Gf_{2}\ra =\la Hg_{1},g_{2}\ra .
\]
An application of (\ref{L2DeqB}) implies that the integrand in  (\ref{quadg1g2n}) is non-negative and monotone increasing in $n$, so the monotone convergence theorem yields (\ref{quadg1g2}) for the present class of $g_1,\, g_2$.

\textbf{Step~3} If $f_r\in L^2(0,\infty)$ then one may write $f_r=\sum_{s=0}^3 i^s f_{r,s}$ where $0\leq f_{r,s}\in L^2(0,\infty)$ for all $r,\, s$. Putting $g_{r,s}=Gf_{r,s}$ and $g_r=Gf_r$, the bilinearity of both sides of (\ref{evalHg1g2}) now implies the general validity of this equation.
\end{proof}

\begin{note}
The proof that $0\notin\Spec(H)$ depends on the assumption that $\phi\in L^2(0,\infty)$. If, for example, $Hf=-f^{\pr\pr}+Vf$ where the potential $V$ has compact support, then the relevant fundamental solution $\phi$ of $H\phi =0$ would equal a positive constant for all large enough $x$, and one would have to add a sufficiently large positive constant to $V$ to obtain a fundamental solution $\phi\in L^2(0,\infty)$.
\end{note}

\section{Robin boundary conditions}\label{sectRBC}

The analysis above leads to a second order differential operator $H$ that satisfies Dirichlet boundary conditions at $0$. Other boundary conditions can be accommodated if one replaces $\psi$ by $\xi$, where
\begin{eq}
\xi(x)=\psi(x)+\gam \phi(x)\label{xidef}
\end{eq}
for all $x\geq 0$. Since we allow $\gam$ to take any non-zero complex value below, the operator $H_\gam$ constructed below need not be self-adjoint. Nevertheless, $\xi^\pr(x)\phi(x)-\phi^\pr(x)\xi(x)=1$ for almost all $x\geq 0$, and this proves enough to adapt our previous analysis. We define the new Green function by
\[
G_\gam(x,y)=\begin{choices}
\xi(x)\phi(y)&\mbox{if } 0\leq x\leq y <\infty,\\
\phi(x)\xi(y)&\mbox{if } 0\leq y\leq x <\infty.
\end{choices}
\]

\begin{theorem}\label{Ggamtheorem}
If $\gam\in\C\backslash \{ 0\}$, then under the assumptions of Lemma~\ref{Gboundedlemma}, $G_\gam$ is the kernel of a bounded operator whose kernel is $0$ and whose range is dense. The spectrum of the closed densely defined operator $H_\gam=G_\gam^{-1}$ does not contain $0$. The essential spectrum of $H_\gam$ does not depend on $\gam$. If $\gam\in \R\backslash \{ 0\}$ then $H_\gam$ is self-adjoint.
\end{theorem}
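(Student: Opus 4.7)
The key observation is that in each of the two cases in the definition of $G_\gam$ one has $G_\gam(x,y)=G(x,y)+\gam\phi(x)\phi(y)$, so as operators on $L^2(0,\infty)$
\[
G_\gam=G+\gam P,\qquad\text{where } Pf=\la f,\phi\ra\,\phi.
\]
Here $P$ is a rank-one self-adjoint operator with $\norm P\norm=\norm\phi\norm^2$. This single identity reduces every assertion of the theorem to a property of a rank-one perturbation of the bounded self-adjoint operator $G$ already studied in Lemma~\ref{Gboundedlemma}. Boundedness of $G_\gam$ is immediate.

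To show $\Ker(G_\gam)=\{0\}$ I would adapt the argument from Lemma~\ref{Gboundedlemma}. Writing out $G_\gam f=0$ gives
\[
\xi(x)\int_x^\infty \phi(y)f(y)\,\rmd y +\phi(x)\int_0^x \xi(y)f(y)\,\rmd y = 0
\]
for all $x\geq 0$; differentiating and combining this with the original identity using the Wronskian relation $\xi^\pr\phi-\phi^\pr\xi=\psi^\pr\phi-\phi^\pr\psi=1$ (the $\gam$-terms cancel) yields a $2\times 2$ linear system with determinant $1$ on the two integrals, forcing both to vanish, and a second differentiation then gives $\phi f=0$, hence $f=0$. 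For the dense range I would observe that $G$ is real-symmetric and $P$ is self-adjoint, so $G_\gam^\ast=G_{\overline\gam}$, and therefore $\Ran(G_\gam)^\perp=\Ker(G_{\overline\gam})=\{0\}$ by what was just proved with $\overline\gam$ in place of $\gam$. Since $G_\gam$ is bounded, injective and has dense range, its inverse $H_\gam$ is a closed, densely defined operator with $H_\gam^{-1}=G_\gam$ bounded on all of $L^2(0,\infty)$, so $0\in\rho(H_\gam)$ and in particular $0\notin\Spec(H_\gam)$.

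For the invariance of the essential spectrum, the crucial point is that for any two non-zero complex values $\gam,\gam^\pr$ the difference $G_\gam-G_{\gam^\pr}=(\gam-\gam^\pr)P$ has rank one and is therefore compact. Since $0$ lies in $\rho(H_\gam)\cap\rho(H_{\gam^\pr})$, a Weyl-type argument (the essential spectrum of a closed operator is invariant under compact perturbations of its resolvent at a common regular point) yields $\EssSpec(H_\gam)=\EssSpec(H_{\gam^\pr})$. Finally, if $\gam\in\R\backslash\{0\}$ then $G_\gam=G+\gam P$ is a sum of two bounded self-adjoint operators and therefore self-adjoint; being also injective, its inverse $H_\gam$ is self-adjoint by the standard result on inverses of injective bounded self-adjoint operators.

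The only genuinely non-routine step is the kernel computation, since it requires repeating the Wronskian manipulation from Lemma~\ref{Gboundedlemma} with $\xi$ in place of $\psi$ and verifying that the $\gam$-contributions cancel; once that is in place, every other assertion follows from the rank-one identity $G_\gam=G+\gam P$ together with standard perturbation theory. A minor point is choosing a definition of $\EssSpec$ for the non-self-adjoint case that is invariant under compact perturbations; this is automatic in the self-adjoint range $\gam\in\R\backslash\{0\}$, and any of the usual definitions works for general $\gam\in\C\backslash\{0\}$.
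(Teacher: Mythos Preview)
Your proof is correct and follows essentially the same route as the paper: the identity $G_\gam=G+\gam\,|\phi\ra\la\phi|$ as a rank-one perturbation, the kernel computation by repeating the Wronskian argument of Lemma~\ref{Gboundedlemma} with $\xi$ in place of $\psi$, dense range via $(G_\gam)^\ast=G_{\overline\gam}$, invariance of the essential spectrum from the rank-one resolvent difference, and self-adjointness for real $\gam$ from self-adjointness of $G_\gam$. The only cosmetic difference is that the paper compares each $H_\gam$ directly to the Dirichlet operator $H$ rather than to another $H_{\gam^\pr}$, but the argument is the same.
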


\begin{proof}
The definition leads directly to the formula
\[
G_\gam(x,y)=G(x,y)+\gam \phi(x)\phi(y),
\]
so $G_\gam$ is obtained from $G$ by adding a bounded rank one perturbation. The boundedness of the operator $G_\gam$ follows immediately. The proof that $\Ker(G_\gam)=0$ is the same as that for $\Ker(G)=0$ in Lemma~\ref{Gboundedlemma}, provided $\psi$ is replaced by $\xi$. The density of the range of $G_\gam$ is equivalent to the vanishing of the kernel of $(G_\gam)^\ast=G_{\overline{\gam}}$. The boundedness of $G_\gam$ directly implies that $H_\gam$ is closed. The fact that $G-G_\gam$ has rank $1$ implies that $H$ and $H_\gam$ have the same essential spectrum by \cite[Cor.~11.2.3]{LOTS}. If $\gam$ is real then $G_\gam$ is self-adjoint, so $H_\gam$ is also self-adjoint by the spectral theorem.
\end{proof}

The simplest description of the domain of $H_\gam$ involves a condition of the form $g^\pr(0)=\sig g(0)$, but this does not make sense unless $g^\pr(0)$ can be evaluated. This problem can be resolved by formulating the boundary condition in a more complicated manner. We make some further comments about this in Note~\ref{phiprcont}.

We say that $g\in\cD_\gam$ if $g$ satisfies the conditions $\cD 1$, $\cD 3$, $\cD 4$, $\cD 5$ and

$\cD 2\gam$.\hspace{2em} We assume that
\[
(\phi g^\pr-\phi^\pr g)(0)=\frac{g(0)}{\gam \phi(0)}.
\]
This makes sense because $\phi g^\pr-\phi^\pr g\in C[0,\infty)$ by condition~$\cD 3$.

Note that we do not replace $\psi$ by $\xi$ in condition~$\cD 5$.

\begin{theorem} \label{Hgamtheorem}
One has $\Dom(H_\gam)=\Ran(G_\gam)=\cD_\gam$ for all $\gam\in\C\backslash \{ 0\}$. If $g\in\cD_\gam$ then
\begin{eq}
H_\gam g= \phi^{-1}(\phi^\pr g- \phi g^\pr)^\pr.\label{Hgamg}
\end{eq}
If $g_r\in \cD_\gam$ for $r=1,\, 2$ then
\begin{eqnarray}
\la H_\gam g_1,g_2\ra &=&\int_0^\infty %
(\phi g_1^\pr-\phi^\pr g_1  )
(\phi   \overline{g_2^\pr} - \phi^\pr \overline{g_2} )%
\phi ^{-2}\, \rmd x%
+\frac{g_1(0)\overline{g_2(0)}}{\gam \phi(0)^2}.%
\label{quadg1g2gam}
\end{eqnarray}

\end{theorem}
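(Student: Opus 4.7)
The plan is to follow the same three-part template used for Theorem~\ref{Htheorem}, making systematic substitutions of $\xi$ for $\psi$ and tracking the new boundary term that arises from $g(0)\ne 0$.

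\textbf{Step 1: $\Ran(G_\gam)\subseteq \cD_\gam$.} For $f\in L^2(0,\infty)$, set $g=G_\gam f$. Using the identity $G_\gam(x,y)=G(x,y)+\gam\phi(x)\phi(y)$, I would write
\[
g(x)=\xi(x)\int_x^\infty \phi(y)f(y)\,\rmd y+\phi(x)\int_0^x \xi(y)f(y)\,\rmd y.
\]
Everything proved in Lemma~\ref{L2Dlemma} about the Dirichlet Green operator transfers with minor bookkeeping: conditions $\cD 1$, $\cD 3$, $\cD 4$ follow from the same differentiations, using $\xi^\pr\phi-\phi^\pr\xi=1$; condition~$\cD 5$ holds because $G_\gam=G+\gam\,\phi\otimes\phi$ and both $(Gf)/\psi$ (by Lemma~\ref{g-asympt-lemma}) and $\phi/\psi$ (by Lemma~\ref{psi/phi}) vanish at infinity. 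The genuinely new check is $\cD 2\gam$: differentiating the formula for $g$ and using $\xi(0)=\gam\phi(0)$, I get $g(0)=\gam\phi(0)\int_0^\infty\phi f\,\rmd y$ and $(\phi g^\pr-\phi^\pr g)(0)=\int_0^\infty\phi f\,\rmd y$, so the ratio equals $g(0)/(\gam\phi(0))$ as required. The analogue of \eqref{L2DeqC}, namely $(\phi g^\pr-\phi^\pr g)^\pr=-\phi f$, then immediately yields formula \eqref{Hgamg}.

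\textbf{Step 2: $\cD_\gam\subseteq \Ran(G_\gam)$.} Given $h\in\cD_\gam$, define $f=\phi^{-1}(\phi^\pr h-\phi h^\pr)^\pr\in L^2$ (by $\cD 4$), set $g=G_\gam f$ and $k=g-h$. Then $\phi^{-1}(\phi^\pr k-\phi k^\pr)^\pr=0$, and as in the Dirichlet argument one obtains $k=c\psi+d\phi$ for constants $c,d$. Condition~$\cD 5$, which both $g$ and $h$ satisfy and which by design is still written in terms of $\psi$, forces $c=0$. Now condition~$\cD 2\gam$ applied to $k=d\phi$ reads $(\phi\cdot d\phi^\pr-\phi^\pr\cdot d\phi)(0)=d\phi(0)/(\gam\phi(0))$, i.e.\ $0=d/\gam$, so $d=0$. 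Thus $h=g=G_\gam f\in\Ran(G_\gam)$.

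\textbf{Step 3: Quadratic form.} I would copy the three-stage argument of Theorem~\ref{Htheorem} verbatim, except that the antiderivative $F=(\phi g_1^\pr-\phi^\pr g_1)\overline{g_2}/\phi$ no longer vanishes at $0$. For compactly supported $f_1,f_2\geq 0$, the same computation shows $g_r=c_r\phi$ for $x$ sufficiently large and hence $F$ vanishes near $\infty$; integrating $F^\pr=h_1+h_2$ over $[0,\infty)$ now gives $-F(0)$ on the right instead of $0$. Using $\cD 2\gam$ for $g_1$, $F(0)=g_1(0)\overline{g_2(0)}/(\gam\phi(0)^2)$, which after rearrangement yields \eqref{quadg1g2gam} for this class. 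The passage to all non-negative $f_1,f_2\in L^2$ uses monotone convergence on the (still non-negative) bulk integrand together with continuity of the map $f\mapsto g(0)$ for the boundary term, and polarization completes the proof for general $f_1,f_2$.

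The only part requiring real care is Step~1's verification of $\cD 2\gam$ and the parallel treatment of the boundary term in Step~3; everything else is structurally identical to the Dirichlet case once one notes that $\xi^\pr\phi-\phi^\pr\xi=1$ continues to hold.
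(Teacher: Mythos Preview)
Your Steps 1 and 2 match the paper's proof essentially line for line; the paper carries out exactly the same verifications (including your observation that $\cD5$ follows from $G_\gam=G+\gam\,\phi\otimes\phi$ together with Lemmas~\ref{psi/phi} and \ref{g-asympt-lemma}, and that $\cD2\gam$ applied to $k=d\phi$ forces $d=0$).

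For the quadratic form identity \eqref{quadg1g2gam}, however, the paper takes a different and shorter route. Rather than rerunning the integration-by-parts/monotone-convergence/polarization argument with a boundary term, it exploits the rank-one relation $G_\gam=G+\gam\,|\phi\ra\la\phi|$ directly: given $g_r=G_\gam f_r$, it sets $h_r=Gf_r\in\cD$, notes that $\phi g_r^\pr-\phi^\pr g_r=\phi h_r^\pr-\phi^\pr h_r$, and writes
\[
\la H_\gam g_1,g_2\ra=\la f_1,G_\gam f_2\ra=\la f_1,Gf_2\ra+\overline{\gam}\,\la f_1,\phi\ra\la\phi,f_2\ra.
\]
The first term is $\la Hh_1,h_2\ra$, already evaluated by Theorem~\ref{Htheorem} as the bulk integral; the second is identified with the boundary term via $\la f_r,\phi\ra=g_r(0)/(\gam\phi(0))$. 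This avoids repeating the three-stage limiting argument entirely. Your direct approach is correct --- in particular the bulk integrand $(\phi g_r^\pr-\phi^\pr g_r)(x)=\int_x^\infty\phi f_r$ remains real, non-negative and monotone in $n$ even for complex $\gam$, so monotone convergence still applies --- and has the virtue of being self-contained, but the paper's method is more economical since the hard work was already done in the Dirichlet case.
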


\begin{proof}
The proofs of ($\cD 1$), ($\cD 3$) and ($\cD 4$) are very similar to the proofs in Lemma~\ref{L2Dlemma}.

The analogues of (\ref{L2DeqA}) and (\ref{L2DeqD}) in the present context are
\begin{eq}
g(x)=\xi(x)\int_x^\infty \phi(y)f(y)\, \rmd y%
+\phi(x)\int_0^x \xi(y)f(y)\, \rmd y\label{L2DeqAgam}
\end{eq}
and
\begin{eq}
g^\pr(x)=\xi^\pr(x)\int_x^\infty \phi(y)f(y)\, \rmd y%
+\phi^\pr(x)\int_0^x \xi(y)f(y)\, \rmd y. \label{L2DeqDgam}
\end{eq}
These imply that
\begin{eq}
(\phi g^\pr-\phi^\pr g)(x)=\int_x^\infty\phi(y)f(y)\, \rmd y.
\label{L2Dgphi}
\end{eq}
By evaluating (\ref{L2DeqAgam}) and (\ref{L2Dgphi}) at $x=0$ we obtain
\begin{eqnarray*}
&&g(0)= c \,\xi(0)\,\hspace{0.25em}\!=\!\,c\,\gam \phi(0),\\
&&(\phi g^\pr-\phi^\pr g)(0)=c,
\end{eqnarray*}
where
\[
c=\int_0^\infty \phi(y) f(y)\, \rmd y.
\]
These equations imply condition~$\cD2\gam$. The proof of ($\cD 5$) uses
\[
(Gf)(x)=(G_\gam f)(x)+\gam \phi(x)\int_0^\infty \phi(y)f(y)\, \rmd y
\]
together with Lemmas~\ref{psi/phi} and \ref{g-asympt-lemma}.

This completes the proof that $\Ran(G_\gam)\subseteq \cD_\gam$. The proof of equality follows that of Lemma~\ref{L2Dlemma}. Given $h\in \cD_\gam$ one defines $k=h-g\in\cD_\gam$ where $g\in \Ran(G_\gam)$ as before, and deduces that $k=c\psi+d\phi$. Lemma~\ref{psi/phi} and condition~$\cD 5$ (applied to $k$) together imply that $c=0$. Applying condition~$\cD 2\gam$ to $k=d\phi$ now yields
\[
0=(\phi k^\pr-\phi^\pr k)(0)=\frac{k(0)}{\gam \phi(0)}=\frac{d}{\gam}.
\]
Therefore $d=0$ and $k=0$. This proves that $h=g\in \Ran(G_\gam)$, and hence that $\Ran(G_\gam)=\cD_\gam$. The formula (\ref{Hgamg}) follows directly.

We prove (\ref{quadg1g2gam}) by using the similar formula in Theorem~\ref{Htheorem}. Given $g_r\in \cD_\gam$ there exist $f_r\in L^2(0,\infty)$ such that $G_\gam f_r=g_r$. We put $h_r=Gf_r\in \cD$, so that $g_r=h_r+\gam \la f_r,\phi\ra \phi$. This implies that $\phi g_r^\pr-\phi^\pr g_r=\phi h_r^\pr-\phi^\pr h_r$. We have
\begin{eqnarray*}
\la H_\gam g_1,g_2\ra&=& \la f_1, G_\gam f_2\ra\\
&=& \la  f_1,Gf_2\ra +\overline{\gam } \la f_1,\phi\ra\, \la \phi,f_2\ra\\
&=& c_1+c_2
\end{eqnarray*}
where
\begin{eqnarray*}
c_1&=& \la  f_1,Gf_2\ra  \\
&=& \la H h_1,h_2\ra\\
&=& \int_0^\infty %
(\phi h_1^\pr-\phi^\pr h_1  )
(\phi   \overline{h_2^\pr} - \phi^\pr \overline{h_2} )%
\phi ^{-2}\, \rmd x\\
&=& \int_0^\infty %
(\phi g_1^\pr-\phi^\pr g_1  )
(\phi   \overline{g_2^\pr} - \phi^\pr \overline{g_2} )%
\phi ^{-2}\, \rmd x,
\end{eqnarray*}
and
\[
c_2 =\overline{\gam } \la f_1,\phi\ra\, \la \phi,f_2\ra.
\]

We have already proved that
\[
\la f_r,\phi\ra=\int_0^\infty f_r(y)\phi(y)\, \rmd y = \frac{g_r(0)}{\gam \phi(0)}
\]
for $r=1,\, 2$. This implies that
\[
c_2=\overline{\gam}\,\frac{g_1(0)}{\gam \phi(0)}%
\,\frac{\overline{g_2(0)}}{\overline{\gam} \phi(0)}%
=\frac{g_1(0)\overline{g_2(0)}}{\gam\phi(0)^2}.
\]
The proof of (\ref{quadg1g2gam}) is completed by adding this to the formula for $c_1$.
\end{proof}

\begin{note}\label{phiprcont}
If there exists $\eps >0$ such that $\phi^\pr$ is continuous on $[0,\eps]$, then the same applies to $\psi^\pr$ by (\ref{psidef}) and to $\xi^\pr$ by (\ref{xidef}). By using (\ref{L2DeqAgam}) and (\ref{L2DeqDgam}) we deduce that $g$ and $g^\pr$ are continuous on $[0,\eps]$. Therefore condition~$\cD 2\gam$ may be rewritten in the form $g^\pr(0)=\sig g(0)$ where
\begin{eq}
\sig=\frac{\phi^\pr(0)}{\phi(0)}+\frac{1}{\gam \phi(0)^2}.  \label{gam2sig}
\end{eq}
\end{note}

\section{Spectral asymptotics}\label{sectSA}

In this section we assume that $H$ satisfies Dirichlet boundary conditions at $0$ and that it is defined as described in Section~\ref{sectDBC}. A detailed analysis of the spectral asymptotics has been given in \cite{HM,KM,SS2} for the analogous problem in $L^2(0,1)$ subject to various boundary conditions, where the operator $H$ is not required to be self-adjoint. We also mention the very thorough analysis of conditions for the existence and completeness of the wave operators for rapidly oscillating potentials on $(0,\infty)$ given in \cite{white}.

Our analysis of the spectral asymptotics depends entirely on the behaviour of $\phi$ as $x\to\infty$. Our main insight is that rapid oscillations in $\phi$ have no effect on the existence of any essential spectrum or on the rate of growth of the eigenvalues $\lam_n$ of $H$ as $n\to\infty$, if $H$ has no essential spectrum. Spectral questions of this type can be reformulated in terms of the Green function $G$ and then proved by using the formula $G=LL^\ast$ proved in Lemma~\ref{Gboundedlemma}. Recall that if $\norm G\norm <\infty$ then
\[
(Lf)(x)=\int_0^x \frac{\phi(x)}{\phi(y)} f(y)\, \rmd y
\]
and $\norm L\norm=\norm G\norm^{1/2}$. We now suppose that $\phi_1,\, \phi_2$ are two functions satisfying the basic hypothesis of this paper and that there exists a constant $c>0$ such that
\begin{eq}
c^{-1}\leq \frac{\phi_2(x)}{\phi_1(x)}\leq c\label{phi1phi2}
\end{eq}
for all $x\geq 0$. We add subscripts to all of the entities associated with $\phi_1,\, \phi_2$ as necessary.

The conclusion of the following theorem may be improved under stronger hypotheses. For example, if (\ref{phi1phi2}) is replaced by
\[
\lim_{x\to\infty} \frac{\phi_2(x)}{\phi_1(x)}=1
\]
then one may prove that
\[
\lim_{x\to\infty} \frac{\mu_{2,n}}{\mu_{1,n}}=1.
\]

\begin{theorem}\label{Gspecasymp}
If (\ref{phi1phi2}) holds, then $G_1$ is a bounded operator on $L^2(0,\infty)$ if and only if  $G_2$ is bounded. Moreover $G_1$ is compact if and only if $G_2$ is compact. If this holds and $\mu_{r,n}$, $n=1,2,\ldots$, are the eigenvalues of $G_r$ written in decreasing order and repeated according to multiplicity, then
\[
c^{-4}\mu_{1,n}\leq \mu_{2,n}\leq c^4\mu_{1,n}
\]
for all $n$.
\end{theorem}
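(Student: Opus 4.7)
The strategy is to exploit the factorization $G_r=L_rL_r^\ast$ established in Lemma~\ref{Gboundedlemma}, and to show that $L_1$ and $L_2$ are conjugate by a bounded, positive multiplication operator with bounded inverse. Set $u(x)=\phi_2(x)/\phi_1(x)$; by hypothesis $c^{-1}\leq u(x)\leq c$ for all $x\geq 0$, and $u$ is positive and continuous, so multiplication by $u$ defines a bounded positive self-adjoint operator $U$ on $L^2(0,\infty)$ with $\norm U\norm\leq c$ and $\norm U^{-1}\norm\leq c$.

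The key identity is
\[
L_2=UL_1U^{-1},
\]
which follows by a one-line computation using $\phi_2(x)/\phi_2(y)=u(x)\,u(y)^{-1}\,\phi_1(x)/\phi_1(y)$ inside the integral defining $L_2 f$. This similarity immediately yields that $L_1$ is bounded (resp.\ compact) if and only if $L_2$ is, together with the norm bound $\norm L_2\norm\leq c^2\norm L_1\norm$, and symmetrically on interchanging the roles of $\phi_1,\phi_2$. Since $G_r=L_rL_r^\ast$, the boundedness equivalence (with $\norm G_2\norm\leq c^4\norm G_1\norm$) and the compactness equivalence for $G_1$ and $G_2$ follow at once.

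For the eigenvalue estimate, I would use that for a compact positive self-adjoint operator the eigenvalues coincide with the singular values, and $\mu_{r,n}=s_n(L_r)^2$. The general singular-value inequality $s_n(ABC)\leq\norm A\norm\,s_n(B)\,\norm C\norm$ applied to $L_2=UL_1U^{-1}$ gives $s_n(L_2)\leq c^2\,s_n(L_1)$, and the mirror identity $L_1=U^{-1}L_2U$ yields the reverse bound. Squaring produces $c^{-4}\mu_{1,n}\leq\mu_{2,n}\leq c^4\mu_{1,n}$ for every $n$.

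The main obstacle is essentially cosmetic and is already resolved by the hypothesis: one needs $u$ to be genuinely bounded above and bounded away from $0$ so that $U$ is a bona fide invertible bounded operator on $L^2(0,\infty)$; the two-sided bound (\ref{phi1phi2}) supplies exactly this. Once the conjugation $L_2=UL_1U^{-1}$ is in place, the rest is routine singular-value calculus combined with the $G=LL^\ast$ representation from Section~\ref{sectDBC}.
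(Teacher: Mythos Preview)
Your argument is correct, and the first half---the similarity $L_2=UL_1U^{-1}$ with $U$ multiplication by $\phi_2/\phi_1$, giving the equivalence of boundedness and compactness---is exactly the paper's argument (the paper writes it as $L_2=D^{-1}L_1D$ with $D=U^{-1}$).

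For the eigenvalue inequality the paper takes a slightly different path. Rather than invoking the singular-value bound $s_n(ABC)\leq\norm A\norm\,\norm C\norm\,s_n(B)$, it expands $G_2=D^{-1}L_1D^2L_1^\ast D^{-1}$ and runs a chain of comparisons: first a quadratic-form sandwich $c^{-2}G_2\leq G_3\leq c^2G_2$ with $G_3=D^{-1}L_1L_1^\ast D^{-1}$, then the general fact that $AB$ and $BA$ share their nonzero eigenvalues to pass to $G_4=L_1^\ast D^{-2}L_1$, then another form sandwich to reach $G_5=L_1^\ast L_1$, which finally has the same eigenvalues as $G_1=L_1L_1^\ast$. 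Your route compresses this chain: the singular-value inequality is precisely the combination of a min--max bound with the $AB\sim BA$ identity, applied once instead of twice. Both arguments produce the same constant $c^4$, but yours is the shorter one.
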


\begin{proof}
We first note that $G_r$ is bounded (resp. compact) if and only if $L_r$ is bounded (resp. compact). By examining the integral kernels one sees that $L_2=D^{-1}L_1D$ where $D$ is the bounded invertible operator given by
\[
Df=\frac{\phi_1}{\phi_2}f
\]
for all $f\in L^2(0,\infty)$. Therefore $L_2$ is bounded (resp. compact) if and only if $L_1$ is bounded (resp. compact).

Assuming that $G_r$ are compact we compare their eigenvalue asymptotics by using the formulae
\begin{eq}
G_1=LL^\ast, \hspace{3em} G_2=%
D^{-1}L_1D^2L_1^\ast D^{-1}.\label{G2formula}
\end{eq}
The assumptions relating $\phi_1$ and $\phi_2$ yield $c^{-2}G_2\leq G_3\leq c^2G_2$ in the sense of quadratic forms, where $G_3= D^{-1}L_1L_1^\ast D^{-1}$. Variational estimates then imply that $G_2$ and $G_3$ have the same eigenvalue asymptotics up to a factor of $c^{\pm 2}$. A very general lemma (see \cite[Problems~1.2.5, 1.2.6]{LOTS}) implies that $G_3$ has the same non-zero eigenvalues with the same multiplicities as $G_4= L_1^\ast D^{-2}L_1$. The same argument as before shows that $G_4$ has the same eigenvalue asymptotics up to a factor of $c^{\pm 2}$ as $G_5=L^\ast L$. This finally has the same eigenvalues  with the same multiplicities as $G_1$.
\end{proof}

Following the notation of Theorem~\ref{Gexpbound}, we put $\phi_1=\phi$ and $\phi_2=\rme^{-\sig}$ and obtain the following.

\begin{corollary}\label{HEScoro}
Let $\phi$ satisfy the hypothesis of Theorem~\ref{Gexpbound} and suppose that
\[
\lim_{x\to\infty}\left\{ (\sig^\pr(x))^2-\sig^{\pr\pr}(x)\right\}=+\infty.
\]
Then the essential spectrum of the operator $H$ considered in Section~\ref{sectDBC} is empty and the eigenvalues $\lam_n$ of $H$ grow at the same rate as those of the \Schrodinger operator $\tH$, where
\[
\tH f=-f^{\pr\pr}+\tV f
\]
and
\[
\tV= (\sig^\pr)^2-\sig^{\pr\pr}.
\]
\end{corollary}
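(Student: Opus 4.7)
The plan is to apply Theorem~\ref{Gspecasymp} with $\phi_1 = \phi$ and the smooth model $\phi_2 = \rme^{-\sig}$, and then to identify the operator built from $\phi_2$ with the classical \Schrodinger operator $\tH$. The hypothesis of Theorem~\ref{Gexpbound} rewrites as $c_1 \le \phi\,\rme^{\sig} \le c_2$, which gives $c_2\inv \le \phi_2/\phi_1 \le c_1\inv$, so (\ref{phi1phi2}) holds. Moreover $\phi_2$ is positive, $C^2$ and belongs to $L^2(0,\infty)$ because $\sig' \ge c > 0$ forces $\sig(x)$ to grow at least linearly, and $\phi_2$ itself satisfies the hypothesis of Theorem~\ref{Gexpbound} trivially (with constants equal to $1$), so the associated resolvent $G_2$ is bounded.

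Next I would show $H_2 = G_2\inv$ coincides with $\tH$. Since $\phi_2 \in C^2$, the formula (\ref{evalHg}) expands by direct differentiation to
\[
H_2 g = \phi_2\inv(\phi_2' g - \phi_2 g')' = -g'' + (\phi_2''/\phi_2)\, g = -g'' + \tV g,
\]
because $\phi_2''/\phi_2 = (\sig')^2 - \sig'' = \tV$. The hypothesis $\tV(x) \to +\infty$ puts $\tV$ in the limit-point case at infinity, so there is a unique self-adjoint realization of $-d^2/dx^2 + \tV$ with Dirichlet condition at $0$, and Theorem~\ref{Htheorem} ensures that $H_2$ is this realization.

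With $H_2 = \tH$ in hand, Theorem~\ref{Gspecasymp} yields $c^{-4}\mu_{1,n} \le \mu_{2,n} \le c^4\mu_{1,n}$ for the eigenvalues of $G_1 = H\inv$ and $G_2 = \tH\inv$; taking reciprocals shows that the eigenvalues $\lam_n$ of $H$ and $\tilde\lam_n$ of $\tH$ grow at the same rate. The emptiness of the essential spectrum of $H$ reduces, again via Theorem~\ref{Gspecasymp}, to the compactness of $G_2$, which is the standard fact that $-d^2/dx^2 + \tV$ on the half-line has compact resolvent whenever $\tV$ is bounded below and tends to $+\infty$; this follows from a min--max argument showing that the essential spectrum lies in $[\inf_{x \ge R} \tV(x),\infty)$ for every $R$. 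The main obstacle is the identification step in the middle paragraph: one must verify that the operator-theoretic Dirichlet realization produced by Section~\ref{sectDBC} from $\phi_2$ really is the standard \Schrodinger operator $\tH$, and this is exactly where the $C^2$ regularity of $\phi_2$ and the limit-point criterion enter.
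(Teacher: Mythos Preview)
Your proposal is correct and follows exactly the route the paper intends: the sentence immediately preceding the corollary (``Following the notation of Theorem~\ref{Gexpbound}, we put $\phi_1=\phi$ and $\phi_2=\rme^{-\sig}$ and obtain the following'') is all the proof the paper supplies, and you have filled in precisely the pieces it leaves implicit---verifying (\ref{phi1phi2}), computing $\phi_2''/\phi_2=(\sig')^2-\sig''$, identifying $H_2$ with the classical Dirichlet \Schrodinger operator via the limit-point argument, and invoking the standard compact-resolvent criterion for potentials diverging to $+\infty$. There is nothing to correct.
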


\section{Scattering theory}\label{sectST}

A number of earlier papers have developed scattering theory in the context of rapidly oscillating potentials; see \cite{comb,matv,pear,skrig,white} and \cite[Apps.~2,~3 to XI.8]{RS3}. In this section we obtain analogous results starting from $\phi$; as usual we do not impose any local or global bounds on $\phi^\pr$.

Throughout the section we assume that
\begin{eq}
\phi(x)=\rme^{-cx-\zet(x)},%
\mbox{ where $c>0$ and $\zet\in \cW\cap L^\infty(0,\infty)$. }\label{phizeta}
\end{eq}
An application of Theorem~\ref{Gexpbound} with $\sig(x)=cx$ implies that $G(\cdot,\cdot)$ is the kernel of a bounded operator on $L^2(0,\infty)$. Our next theorem compares the operator $H$ constructed from $\phi$ as described in Section~\ref{sectDBC} with the operator $H_0f=-f^{\pr\pr}+c^2 f$ constructed by the same method from $\phi_0=\rme^{-cx}$.

\begin{theorem}\label{scatt-theorem}
If $\phi$ is defined by (\ref{phizeta}) and $\zet$ satisfies the further condition $|\zet(x)|\leq \nu(x)$ for all $x\geq 0$ where $\nu$ is a monotonic decreasing function lying in $L^1(0,\infty)$,
then the wave operators between $H$ and $H_0$ exist and are complete.
\end{theorem}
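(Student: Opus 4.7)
The plan is a standard two-step reduction. First, since both $H$ and $H_0$ are bounded below by strictly positive constants (namely $\norm G\norm^{-1}$ by Theorem~\ref{Htheorem}, and $c^2$), the map $\lam\mapsto 1/\lam$ is smooth and strictly monotone on their common positive spectrum, so the Birman--Kato invariance principle gives that $W_\pm(H,H_0)$ exist and are complete if and only if $W_\mp(G,G_0)$ do, where $G=H^{-1}$ and $G_0=H_0^{-1}$. Second, by the Birman--Kuroda theorem, it then suffices to prove that $G-G_0$ is trace class.

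The main tool for the trace-class step is the gauge-type identity
\[
L = M_{e^{-\zet}}\,L_0\,M_{e^{\zet}},
\]
which is immediate from $\phi=\phi_0 e^{-\zet}$ and the explicit formula for $L$ given in Lemma~\ref{Gboundedlemma}. Squaring gives $G=M_{e^{-\zet}}L_0 M_{e^{2\zet}}L_0^*M_{e^{-\zet}}$. Writing $R=M_{e^{-\zet}}-I$ and $S=M_{e^{2\zet}}-I$, each a multiplication operator by a real function pointwise dominated by $C\nu$, expansion decomposes $G-G_0$ into seven summands, every one carrying at least one factor of $R$ or $S$.

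I would handle each summand of the form $L_0 M_\rho L_0^*$ with $|\rho|\leq C\nu$ by factoring $\rho=\rho_1\rho_2$ with $\rho_1=|\rho|^{1/2}\mbox{sign}(\rho)$ and $\rho_2=|\rho|^{1/2}$. Both $L_0 M_{\rho_1}$ and $M_{\rho_2}L_0^*$ are then Hilbert--Schmidt, since their squared HS-norms are bounded by
\[
\int_0^\infty|\rho_i(y)|^2\int_y^\infty e^{-2c(x-y)}\,\rmd x\,\rmd y \leq \frac{C}{2c}\norm\nu\norm_1,
\]
so the product $L_0 M_\rho L_0^*$ lies in $\mathcal{L}^2\cdot\mathcal{L}^2\subseteq\mathcal{L}^1$. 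Sandwiching by the bounded operators $M_{e^{-\zet}}$ or $R$ preserves trace class, and the sandwich $RG_0R=(RL_0)(L_0^*R)$ is itself a product of two Hilbert--Schmidt operators. For the two ``unsymmetric'' terms $RG_0$ and $G_0R=(RG_0)^*$, the factorization via $G_0=L_0 L_0^*$ yields only Hilbert--Schmidt, so a separate argument is needed: split the explicit kernel $G_0(x,y)=(2c)^{-1}(e^{-c|x-y|}-e^{-c(x+y)})$ into its rank-one boundary piece (trace class after composition with $R$) and its convolution piece $(2c)^{-1}e^{-c|x-y|}$, corresponding to the Fourier multiplier $(-\partial^2+c^2)^{-1}$. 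A Birman--Solomyak estimate now gives trace class: the hypothesis that $\nu$ is monotone decreasing and in $L^1$ implies $\sum_{k\geq 0}(\int_k^{k+1}\nu^2)^{1/2}\leq\sum_{k\geq 0}\nu(k)\leq\nu(0)+\norm\nu\norm_1<\infty$, placing $\rho$ in $\ell^1(L^2)$.

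The hard part will be precisely this last trace-class property of $RG_0$ and $G_0R$. All other parts reduce to elementary Hilbert--Schmidt computations, but the single-multiplication terms require an essential use of the monotonicity and $L^1$-integrability of $\nu$ via the Birman--Solomyak criterion (or equivalently a direct finite-rank approximation on integer intervals). Without monotonicity, one would only obtain a Hilbert--Schmidt resolvent difference, which is insufficient to conclude completeness via Birman--Kuroda.
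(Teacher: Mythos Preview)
Your argument is correct, but it takes a substantially different and heavier route than the paper's. Two remarks first: the invariance-principle detour is unnecessary, since Kuroda--Birman (Reed--Simon, Theorem~XI.9) already gives existence and completeness of $W_\pm(H,H_0)$ directly from $G-G_0\in\mathcal{L}^1$ when $0$ lies in the resolvent set of both; and your claim that $RL_0$ is Hilbert--Schmidt tacitly uses $\nu\in L^2$, which you should note follows from $\nu\in L^1$ together with the boundedness of $\zet$ (so that one may take $\nu$ bounded without loss).

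The paper avoids your seven-term expansion and the Birman--Solomyak machinery entirely by exploiting a continuous rank-one decomposition. Writing $(Mf)(x)=\langle f,\xi_x\rangle$ with $\xi_x(u)=\chi_{[x,\infty)}(u)\,\phi(u)/\phi(x)$, the identity $G=M^\ast M$ becomes $G=\int_0^\infty |\xi_x\rangle\langle\xi_x|\,\rmd x$, and likewise for $G_0$ with $\xi_{0,x}(u)=\chi_{[x,\infty)}(u)\,\rme^{-c(u-x)}$. Subtracting pointwise and using the elementary rank-two bound $\norm\,|\xi_x\rangle\langle\xi_x|-|\xi_{0,x}\rangle\langle\xi_{0,x}|\,\norm_{\mathrm{tr}}\leq(\norm\xi_x\norm+\norm\xi_{0,x}\norm)\norm\xi_x-\xi_{0,x}\norm$ reduces everything to estimating $\norm\xi_x-\xi_{0,x}\norm$. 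The monotonicity of $\nu$ enters through the single inequality $|\zet(u)-\zet(x)|\leq 2\nu(x)$ for $u\geq x$, giving $\norm\xi_x-\xi_{0,x}\norm\leq C\nu(x)$ and hence $\norm G-G_0\norm_{\mathrm{tr}}\leq C'\norm\nu\norm_1$. This is entirely elementary: no Fourier analysis, no splitting of the Dirichlet Green function, no $\ell^1(L^2)$ criterion. Your approach, by contrast, is more structural---the conjugation identity $L=M_{\rme^{-\zet}}L_0M_{\rme^{\zet}}$ is a clean way to organise the perturbation---but the price is that the two ``unsymmetric'' terms $RG_0$, $G_0R$ force you to import Birman--Solomyak, which is considerably less self-contained.
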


\begin{proof}\\
\textbf{Step~1} It is sufficient by the Kuroda-Birman theorem to prove that $G-G_0$ is a trace class operator; see \cite[Theorem~XI.9]{RS3}. If one writes (\ref{Mdef}) in the form
\[
(Mf)(x)=\langle f,\xi_x\ra
\]
where
\[
\xi_x(u)=\begin{choices}
\phi(u)/\phi(x) &\mbox{ if $u\geq x$,}\\
0&\mbox{ otherwise,}
\end{choices}
\]
then (\ref{G=MM}) takes the form
\[
G=\int_0^\infty | \xi_x\ra\la \xi_x|\, \rmd x .
\]

Therefore
\begin{eqnarray*}
\norm G-G_0\norm_{{\rm tr}}&\leq&\int_0^\infty %
\norm \, | \xi_x\ra\la \xi_x|  - | \xi_{0,x}\ra\la \xi_{0,x}|%
\, \norm_{{\rm tr}}\, \rmd x\\
&\leq& \int_0^\infty %
(\norm \xi_x\norm + \norm \xi_{0,x}\norm )%
\norm \xi_x- \xi_{0,x}\norm\, \rmd x.
\end{eqnarray*}
We need to estimate the terms in this bound.

\textbf{Step~2} It follows directly from their definitions that \[
\norm \xi_{0,x}\norm =\frac{1}{\sqrt{2c}}
\]
for all $x\geq 0$ and that
\begin{eqnarray*}
\norm \xi_x\norm^2 &=& \int_x^\infty \frac{\phi(u)^2}{\phi(x)^2} \, \rmd u\\
&=& \int_x^\infty \exp(-2c(u-x)-2\zet(u)+2\zet(x))\, \rmd u\\
&\leq  &  \int_x^\infty \exp(-2c(u-x)+4\norm\zet\norm_\infty )\, \rmd u\\
&=& \frac{\rme^{4\norm\zet\norm_\infty}}{2c}.
\end{eqnarray*}
Therefore
\[
\norm \xi_x\norm\leq \frac{\rme^{2\norm\zet\norm_\infty}}{\sqrt{2c}}
\]
for all $x\geq 0$. If $u\geq x\geq 0$ then
\begin{eqnarray*}
|\xi_x(u)-\xi_{0,x}(u)|&=&  \rme^{-c(u-x)}%
\left| \rme^{-\zet(u)+\zet(x)}-1\right|\\
&\leq & \rme^{-c(u-x)}\rme^{2\norm \zet\norm_\infty}%
\left| \zet(u)-\zet(x)\right| .
\end{eqnarray*}
This uses the elementary bound
\begin{eq}
|\rme^{\zet(x)-\zet(u)}-1|\leq \rme^{2\norm \zet\norm_\infty}|\zet(x)-\zet(u)|,\label{expzeta}
\end{eq}
valid for all $x,u\geq 0$. Therefore
\begin{eqnarray}
\norm \xi_x-\xi_{0,x}\norm^2&\leq&\rme^{4\norm \zet\norm_\infty}\int_x^\infty %
\rme^{-2c(u-x)}
\left| \zet(u)-\zet(x)\right|^2\, \rmd u\label{diffbound1}\\
&\leq&\rme^{4\norm \zet\norm_\infty}\int_x^\infty %
\rme^{-2c(u-x)}4\nu(x)^2\, \rmd u\nonumber\\
&=& \rme^{4\norm \zet\norm_\infty}%
\frac{4\nu(x)^2}{2c}.\label{diffbound2}
\end{eqnarray}

\textbf{Step~3} Putting the various bounds together, there exist constants $k_r$ such that
\[
\norm G-G_0\norm_{{\rm tr}}\leq k_1\int_0^\infty \norm \xi_x-\xi_{0,x}\norm\, \rmd x\leq k_2\int_0^\infty \nu(x)\, \rmd x <\infty.
\]
\end{proof}

\begin{corollary}\label{scatt-cor}
If $\phi$ is defined by (\ref{phizeta}) and $\zet$ satisfies the further condition $|\zet(x)|\leq k(1+x)^{-\alp}$ for some $k>0$ and all $x\geq 0$, then the wave operators between $H$ and $H_0$ exist and are complete provided $\alp>1$.
\end{corollary}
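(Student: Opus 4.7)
The plan is essentially to verify that the hypothesis of Theorem~\ref{scatt-theorem} is satisfied with an explicit choice of majorant $\nu$, and then invoke that theorem as a black box. Since the corollary already stipulates that $\phi$ has the form (\ref{phizeta}), we are free to use Theorem~\ref{scatt-theorem} directly.

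The natural candidate is $\nu(x) = k(1+x)^{-\alp}$. First I would check that $\nu$ satisfies the three properties demanded of the majorant in Theorem~\ref{scatt-theorem}. Monotonicity is immediate, since $(1+x)^{-\alp}$ is strictly decreasing on $[0,\infty)$ for any $\alp>0$. Integrability on $(0,\infty)$ reduces to the elementary computation
\[
\int_0^\infty k(1+x)^{-\alp}\, \rmd x=\frac{k}{\alp-1},
\]
which is finite precisely when $\alp>1$; this is where the hypothesis $\alp>1$ is used. Finally, the pointwise bound $|\zet(x)|\leq \nu(x)$ is exactly the assumption of the corollary.

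With these three properties in hand, Theorem~\ref{scatt-theorem} applies directly to $\phi$ and $\zet$, yielding the existence and completeness of the wave operators between $H$ and $H_0$. I do not foresee any obstacle, since the work has already been done in Theorem~\ref{scatt-theorem}; the corollary merely isolates a convenient power-type sufficient condition on the decay of $\zet$. The only minor points worth remarking on in the write-up are that $\nu(x)=k(1+x)^{-\alp}$ is indeed monotonic (some polynomial majorants in the literature are only eventually monotonic) and that the condition $\zet\in L^\infty(0,\infty)$ required in (\ref{phizeta}) is automatic from $|\zet(x)|\leq k$ for $x\geq 0$, so no additional hypothesis is hidden.
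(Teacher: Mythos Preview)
Your proposal is correct and is exactly the intended argument: the paper states the corollary without proof, treating it as an immediate specialisation of Theorem~\ref{scatt-theorem} with $\nu(x)=k(1+x)^{-\alp}$, which is monotone decreasing and lies in $L^1(0,\infty)$ precisely when $\alp>1$. Your added remark that $\zet\in L^\infty$ is automatic from the bound is a harmless clarification.
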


\begin{example}\label{scatt-ex}
If $\alp>0$ and $\zet(x)=(1+x)^{-\alp}$ for all $x\geq 0$ then
\[
|\zet(u)-\zet(x)|\leq (u-x)\alp(1+x)^{-\alp-1}
\]
provided $0\leq x\leq u$. Using (\ref{diffbound1}) we obtain
\[
\norm \xi_x-\xi_{0,x}\norm^2\leq k_1\int_x^\infty%
\rme^{-2c(u-x)}(u-x)^2\alp^2 (1+x)^{-2\alp-2}\, \rmd u\leq k_2 (1+x)^{-2\alp-2}.
\]
Therefore
\[
\int_0^\infty \norm \xi_x-\xi_{0,x}\norm\, \rmd x\leq k_3 \int_0^\infty (1+x)^{-\alp-1}\, \rmd x <\infty.
\]
We conclude that the wave operators exist and are complete for all $\alp >0$. The potential in this example is given by
\begin{eqnarray*}
V(x)&=&\frac{\phi^{\pr\pr}(x)}{\phi(x)}\\
&=& -\zet^{\pr\pr}(x)+(c+\zet^\pr(x))^2\\
&=&c^2-2c\alp(1+x)^{-\alp-1}+O((1+x)^{-\alp-2}).
\end{eqnarray*}
Therefore $V-c^2\in L^1(0,\infty)$ for all $\alp >0$. This is the traditional condition for existence and completeness of the wave operators.
\end{example}

\begin{note}
Example~\ref{scatt-ex} seems to suggest that the condition $\alp>1$ in Corollary~\ref{scatt-cor} is not optimal, but we conjecture that the corollary cannot be improved, and that the need for a stronger hypothesis is the price paid for not assuming any bounds on $\phi^\pr$, or equivalently on $\zet^\pr$. The benefit is that the result applies to a wide range of potentials, and in particular to many potentials that are rapidly oscillating as $x\to\infty$.

One may also obtain intermediate results if one assumes some global Holder continuity condition on $\zet$.
\end{note}

\textbf{Acknowledgements} I should like to thank Fritz Gesztesy, Sasha Pushnitski, Alex Sobolev and Gerald Teschl for very helpful comments.
\vspace{6ex}

E. B. Davies,\\
Department of Mathematics,\\
King's College London,\\
Strand,\\
London,WC2R 2LS,\\
UK

\begin{thebibliography}{99}

\bibitem{atk} Atkinson F. V., The asymptotic solution of second-order differential equations, Ann. Mat. Pura Appl. 37 (1954) 347-378.

\bibitem{AEZ} Atkinson, F. V., Everitt, W. N., Zettl, A., Regularization of a Sturm-Liouville problem with an interior singularity using quasi-derivatives, Diff. Int. Eqns. 1 (2) (1988) 213-221.

\bibitem{B-AD} Ben-Artzi, M. and Devinatz, A., Spectral and scattering theory for the adiabatic oscillator and related potentials, J. Math. Phys. 20 (4) (1979) 594-607.

\bibitem{BL} Benguria, R. and Loss, M., a simple proof of a theorem of Laptev and Leidl, Math. Research Lett. 7 195-203 (2000)

\bibitem{BE} Bennewitz, C. and Everitt, W. N., On second-order left-definite boundary value problems, pp. 31-67 in ``Ordinary Differential Equations and Operators (Dundee, 1982)'', Lecture Notes in Math. 1032, Springer, Berlin, 1983.

\bibitem{comb} Combescure, M., Spectral and scattering theory
for a class of strongly oscillating potentials, Comm. Math. Phys. 73 (1980) 43-62.

\bibitem{HKST} Davies, E. B., Heat Kernels and Spectral Theory, Camb. Univ. Press, 1989.

\bibitem{EBD1} Davies, E. B., Heat kernel bounds, conservation of probability and the Feller property, J. d'Analyse Math. 58 (1992) 99-119.

\bibitem{STDO} Davies, E. B., Spectral Theory and Differential Operators, Camb. Univ. Press, 1995.

\bibitem{LOTS} Davies, E. B., Linear Operators and their Spectra, Camb. Univ. Press, 2007.

\bibitem{DH} Davies, E. B. and Harrell II, E. M., Conformally flat Riemannian metrics, \Schrodinger operators and semiclassical approximation, J. Diff. Eqns. 66 (1987) 165-188.

\bibitem{EGNT} Eckhardt, J., Gesztesy, F., Nichols, R., Teschl, G., Weyl-Titchmarsh theory for Sturm-Liouville operators with distributional coefficients, 2011, in preparation.

\bibitem{ET} Eckhart, J. and Teschl, G.,
Sturm-Liouville operators with measure-valued coefficients, preprint, 2011.

\bibitem{HM} Hryniv, R. O. and Mykytyuk, Y. V., Eigenvalue asymptotics for Sturm-Liouville operators with singular potentials, J. Funct. Anal. 238 (2006) 27-57.

\bibitem{KM} Kappeler, T., M\"ohr, C., Estimates for periodic and Dirichlet eigenvalues of the \Schrodinger operator with singular potentials, J. Funct. Anal. 186 (2001) 62-91.

\bibitem{matv} Matveev, V. B. and Skriganov, M. M., Wave operators for the \Schrodinger equation with rapidly oscillating potential, Soviet Math. Dokl. Vol. 13 (1972), 185-188.

\bibitem{ming} Mingarelli, A. B., Volterra-Stieltjes integral equations and generalized ordinary differential expressions, Lecture Notes in Mathematics 989, Springer, Berlin, 1983.

\bibitem{nest}  Nesterov, P. N., Construction of the asymptotic behavior of solutions of the one-dimensional \Schrodinger equation with a rapidly oscillating potential. (Russian) Mat. Zametki 80 (2006), no. 2, 240-250; translation in Math. Notes 80 (2006), no. 1-2, 233-243.

\bibitem{pear} Pearson, D. B., Scattering theory for a class of oscillating potentials. Helv. Phys. Acta 52 (1979) 541-554.

\bibitem{RS3} Reed, M. and Simon, B., Methods of Modern Mathematical Physics, Vol.~3, Scattering Theory, Academic Press, New York, 1979.

\bibitem{RS4} Reed, M. and Simon, B., Methods of Modern Mathematical Physics, Vol.~4, Analysis of Operators, Academic Press, New York, 1978.

\bibitem{reid} Reid, W. T., Riccati Differential Equations, Academic Press, New York, 1972.

\bibitem{SS1} Savchuk, A. M. and Shkalikov, A. A., Sturm-Liouville operators with singular potentials, Math. Notes 66 (1999) 741-753.

\bibitem{SS2} Savchuk, A. M. and Shkalikov, A. A., On the eigenvalues of the Sturm-Liouville operator with potentials from Sobolev spaces. (Russian) Mat. Zametki 80 (2006) 864-884; translation in Math. Notes 80 (2006) 814-832.

\bibitem{skrig} Skriganov, M. M., The spectrum of a \Schrodinger operator with rapidly oscillating potential, (Russian), Boundary value problems of mathematical physics, Part~8, Trudy Mat. Inst. Steklov 125 (1973) 187-195. English version, Proc. Steklov Inst. Math. 125 (1973) 177-185.

\bibitem{tes} Teschl, G., Mathematical Methods in Quantum Mechanics, with Applications to \Schrodinger Operators, Graduate Studies in Mathematics, vol.~99, Amer. Math. Soc. 2009.

\bibitem{white} White, D. A. W., \Schrodinger operators with rapidly oscillating central potentials, Trans. Amer. Math. Soc. 275 (1983) 641-677.

\bibitem{wiel} Wielens, N., The essential self-adjointness of generalized \Schrodinger operators, J. Funct. Anal. 61 (1985) 98-115.








\end{thebibliography}
\end{document}